\declaretheorem[name=Theorem]{theorem}
\declaretheorem[name=Lemma]{lemma}
\declaretheorem[name=Corollary]{corollary}
\theoremstyle{plain}
\newtheorem*{theorem*}{Theorem}
\newtheorem{remark}{Remark}
\newtheorem{definition}{Definition}
\theoremstyle{remark}
\theoremstyle{plain}
\newcommand{\RR}{\mathbb R}
\newcommand{\EE}{\mathbb E}
\newcommand{\UU}{\mathbb U}
\newcommand{\NN}{\mathbb N}
\newcommand{\Fcal}{\mathcal F}
\newcommand{\Pcal}{\mathcal P}
\newcommand{\1}{\mathds{1}}
\newcommand{\seq}[4]{(#1)_{{#2}={#3}}^{#4}}
\newcommand{\infseqt}[1]{\seq{#1}{t}{1}{\infty}}
\newcommand{\infseqn}[1]{\seq{#1}{n}{1}{\infty}}
\newcommand{\as}{{\mathrm{a.s.}}}
\newcommand{\oPcalas}{\widebar o_\Pcal}%
\newcommand{\OPcalas}{\widebar O_\Pcal}
\newcommand{\iid}{\text{i.i.d.}}
\newcommand{\eps}{\varepsilon}
\newcommand{\Var}{\mathrm{Var}}
\newcommand{\leqc}{{\leq c}}
\newcommand{\leqo}{{\leq 1}}
\newcommand{\supP}{{\sup_{P \in \Pcal}}}
\newcommand{\Pin}{{P \in \Pcal}}
\newcommand{\Pstar}{{P^\star}}
\newcommand{\supkm}{{\sup_{k \geq m}}}
\newcommand{\tth}{{\text{th}}}
\newcommand{\mto}{{m \to \infty}}
\newcommand{\med}{\mathrm{med}}
\newcommand{\Prob}{P}
\newif\ifverbose 
\title{Distribution-uniform strong laws of large numbers}
\author{
  Ian Waudby-Smith$^\dagger$, Martin Larsson$^\ddagger$, and Aaditya Ramdas$^\ddagger$\\
  $^\dagger$University of California, Berkeley\\
  $^\ddagger$Carnegie Mellon University
}
\date{\today}
\begin{document}
\maketitle
\setcounter{tocdepth}{2}
\makeatletter
\renewcommand\tableofcontents{%
  \@starttoc{toc}%
}

\makeatother

\begin{abstract}
  We revisit the question of whether the strong law of large numbers (SLLN) holds uniformly in a rich family of distributions, culminating in a distribution-uniform generalization of the Marcinkiewicz-Zygmund SLLN. These results can be viewed as extensions of Chung's distribution-uniform SLLN to random variables with uniformly integrable $q^\text{th}$ absolute central moments for $0 < q < 2$. Furthermore, we show that uniform integrability of the $q^\text{th}$ moment is both sufficient and necessary for the SLLN to hold uniformly at the Marcinkiewicz-Zygmund rate of $n^{1/q - 1}$. These proofs centrally rely on novel distribution-uniform analogues of some familiar almost sure convergence results including the Khintchine-Kolmogorov convergence theorem, Kolmogorov's three-series theorem, a stochastic generalization of Kronecker's lemma, and the Borel-Cantelli lemmas.
  We also consider the non-identically distributed case.
\end{abstract}


\tableofcontents


\section{Introduction}\label{section:introduction}

In his 1951 Berkeley Symposium paper titled ``The strong law of large numbers'' \citep{chung_strong_1951}, Kai Lai Chung writes \textit{``For use in certain statistical applications Professor Wald raised the question of the uniformity of the strong [law of large numbers] with respect to a family of [distributions]''}. Chung's paper proceeds to provide a concrete answer to that question, yielding a generalization of Kolmogorov's strong law of large numbers (SLLN) that holds uniformly in a rich family of distributions having a uniformly integrable first absolute moment. Let us recall (a minor refinement of) Chung's distribution-uniform SLLN here.

\begin{theorem*}[Chung's $\Pcal$-uniform strong law of large numbers \citep{chung_strong_1951,ruf2022composite}]\label{theorem:chung}
  Let $ \Pcal$ be a collection of probability distributions and $\infseqn{X_n}$ be independent and identically distributed random variables defined on the probability spaces $(\Omega, \Fcal,  \Pcal):= (\Omega, \Fcal, P)_{P \in  \Pcal}$ satisfying the $\Pcal$-uniform integrability ($\Pcal$-UI) condition
  \begin{equation}\label{eq:chung-uniform-integrability}
    \lim_{m \to \infty} \sup_{P \in  \Pcal} \EE_P \left ( |X- \EE_P(X)| \cdot \1{\{ |X - \EE_P(X)| > m \}} \right ) = 0.
  \end{equation}
  Then for every $\eps > 0$,
  \begin{equation}\label{eq:chung-slln}
    \lim_{m \to \infty} \sup_{P \in  \Pcal} \Prob \left ( \sup_{k \geq m} \left \lvert \frac{1}{k} \sum_{i=1}^k X_i - \EE_P(X) \right \rvert \geq \eps \right ) = 0.
  \end{equation}
\end{theorem*}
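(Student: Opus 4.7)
My plan is to follow Kolmogorov's classical truncation-based SLLN proof, promoting each ingredient to a $\Pcal$-uniform analogue. First I would reduce to the zero-mean case by centering each $X_i$; the $\Pcal$-UI hypothesis \eqref{eq:chung-uniform-integrability} is preserved and becomes $\lim_{m \to \infty} \supP \EE_P(|X|\1\{|X|>m\}) = 0$. I then introduce the truncated variables $Y_i := X_i \1\{|X_i| \leq i\}$ (which are deterministic functions of $X_i$, with no $P$-dependence) and decompose
$$\frac{1}{k}\sum_{i=1}^k X_i = \underbrace{\frac{1}{k}\sum_{i=1}^k (Y_i - \EE_P Y_i)}_{(\mathrm{I})} + \underbrace{\frac{1}{k}\sum_{i=1}^k \EE_P Y_i}_{(\mathrm{II})} + \underbrace{\frac{1}{k}\sum_{i=1}^k (X_i - Y_i)}_{(\mathrm{III})}.$$
It then suffices to show that each of $\sup_{k \geq m}|(\mathrm{I})|$, $\sup_{k \geq m}|(\mathrm{II})|$, and $\sup_{k\geq m}|(\mathrm{III})|$ tends to $0$ in $\Pcal$-uniform probability as $m \to \infty$.

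Term (II) is purely deterministic: since $\EE_P X = 0$, $|\EE_P Y_i| = |\EE_P(X\1\{|X|>i\})| \leq \EE_P(|X|\1\{|X|>i\})$, which vanishes as $i \to \infty$ uniformly in $P$ by $\Pcal$-UI; a uniform Cesaro argument then gives $\sup_{k\geq m}|(\mathrm{II})| \to 0$. For term (III), a union bound and the layer-cake identity give $\PP_P(A_m^c) \leq \sum_{i \geq m}\PP_P(|X|>i) \leq \EE_P(|X|\1\{|X| \geq m\}) \to 0$ uniformly in $P$, where $A_m := \{X_i = Y_i\ \text{for all }i \geq m\}$. On $A_m$, the tail contributions to $\sum_{i=1}^k(X_i-Y_i)$ vanish for $k \geq m$, so
$$\sup_{k \geq m}|(\mathrm{III})|\,\1_{A_m} \leq \frac{1}{m}\sum_{i=1}^{m-1}|X_i|\1\{|X_i|>i\},$$
whose expectation $m^{-1}\sum_{i=1}^{m-1}\EE_P(|X|\1\{|X|>i\})$ vanishes uniformly in $P$ by Cesaro. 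Markov's inequality combined with the uniform bound on $\PP_P(A_m^c)$ then controls term (III) in $\Pcal$-uniform probability.

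Term (I) is the main obstacle, and it is here that the paper's new machinery enters. The basic variance estimate
$$\sum_{i=1}^\infty \frac{\Var_P(Y_i)}{i^2} \leq \EE_P\!\left(X^2 \sum_{i \geq |X|\vee 1} \frac{1}{i^2}\right) \leq \frac{\pi^2}{6} + 2\,\EE_P|X|$$
is uniformly bounded in $P$ since $\Pcal$-UI forces $\supP \EE_P|X| < \infty$. I would then invoke the paper's $\Pcal$-uniform analogue of the Khintchine-Kolmogorov convergence theorem to conclude that $\sum_{i=1}^\infty (Y_i - \EE_P Y_i)/i$ converges $\Pcal$-uniformly almost surely, and apply the paper's $\Pcal$-uniform Kronecker lemma to conclude $\sup_{k \geq m}|(\mathrm{I})| \to 0$ in $\Pcal$-uniform probability. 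The substantive difficulty is therefore transferred to the development of those two $\Pcal$-uniform analogues; granted them, the classical truncation scheme transfers with only cosmetic changes.
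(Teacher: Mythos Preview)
Your decomposition into terms (II) and (III) is sound and tracks the paper's treatment of the first and third series; the gap is entirely in your handling of term (I), where two hypotheses of the paper's machinery are not verified.

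First, the paper's $\Pcal$-uniform Khintchine--Kolmogorov theorem (\cref{theorem:khintchine-kolmogorov}) does not merely assume $\sup_{P}\sum_{i=1}^\infty \Var_P(Y_i)/i^2 < \infty$; it requires the \emph{tails} to vanish uniformly, $\lim_{m\to\infty}\sup_{P}\sum_{i\geq m}\Var_P(Y_i)/i^2 = 0$. Your bound $\pi^2/6 + 2\EE_P|X|$ gives only uniform finiteness, and that is not enough: take $\Pcal = \{P_n : n\in\NN\}$ with $X$ under $P_n$ supported on $\{-n^{1/2}, n^{1/2}\}$, scaled so that all the variance sits near index $n$; then the total sum is uniformly bounded but the tails do not vanish uniformly. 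To close this you must actually show $\frac{1}{m}\EE_P(X^2\1\{|X|\leq m\}) \to 0$ uniformly in $P$; the paper does this via the de la Vall\'ee Poussin criterion (Lemmas~\ref{lemma:phi-inequality}--\ref{lemma:upper bound by phi}), though a direct splitting at a slowly growing threshold $T(m)\to\infty$ also works, writing $\frac{1}{m}\EE_P(X^2\1\{|X|\leq m\}) \leq T^2/m + \EE_P(|X|\1\{|X|>T\})$.

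Second, the paper's $\Pcal$-uniform Kronecker lemma (\cref{lemma:kronecker}) requires not only that the partial sums $S_n = \sum_{i\leq n}(Y_i-\EE_P Y_i)/i$ be $\Pcal$-uniformly Cauchy but also that $S_n = \OPcal(1)$, i.e.\ $\Pcal$-uniformly bounded in probability. In the $P$-pointwise setting this is automatic (an almost surely convergent sequence has a tight limit), but $\Pcal$-uniformly Cauchy does \emph{not} by itself give uniform tightness of the limits across $\Pcal$. The paper devotes a separate three-series theorem (\cref{theorem:boundedness three series theorem}) and a supporting lemma to establishing exactly this boundedness; you have not addressed it at all, and without it \cref{lemma:kronecker} cannot be invoked.
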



Notice that Chung's SLLN recovers Kolmogorov's as a special case when the class of distributions $\Pcal = \{P\}$ is taken to be a singleton such that $\EE_P|X| < \infty$ since for any sequence of random variables $\infseqn{Y_n}$,
\begin{equation}\label{eq:equivalence-almost sure-time-uniform-highprob}
  \Prob \left ( \lim_{n \to \infty} Y_n = 0 \right ) = 1\quad\text{if and only if}\quad\forall \eps > 0,~\lim_{m \to \infty} \Prob \left ( \sup_{k \geq m} |Y_k| \geq \eps \right ) = 0.
\end{equation}
The equivalence in \eqref{eq:equivalence-almost sure-time-uniform-highprob} highlights why Chung's original characterization of the SLLN holding ``$\Pcal$-uniformly'' in \eqref{eq:chung-slln} is a natural one.
Despite Chung's advance, there are four open questions that we aim to address in this paper:
\begin{enumerate}[label = (\roman*)]
\item Can the convergence rate in \eqref{eq:chung-slln} be improved in the presence of higher moments in the sense of \cite{marcinkiewicz1937fonctions}? That is, can it be shown that for all $\eps > 0$,
  \begin{equation}\label{eq:questions-i}
    \lim_{m \to \infty} \sup_\Pin \Prob \left ( \sup_{k \geq m} \left \lvert \frac{1}{k^{1/q}} \sum_{i=1}^k \left ( X_i - \EE_P(X) \right ) \right \rvert \geq \eps \right ) = 0
  \end{equation}
  under certain $\Pcal$-UI conditions on the $q^\tth$ moment for $1 < q< 2$?
\item Is it possible to restrict the \emph{divergence} rate when $X$ has fewer than 1 but more than 0 finite absolute moments, again in the sense of \cite{marcinkiewicz1937fonctions}? That is, can it be shown that
  \begin{equation}\label{eq:questions-ii}
    \lim_{m \to \infty} \sup_\Pin \Prob \left ( \sup_{k \geq m} \left \lvert \frac{1}{k^{1/q}} \sum_{i=1}^k  X_i \right \rvert \geq \eps \right ) = 0,
  \end{equation}
  under similar $\Pcal$-UI conditions but for $0 < q<1$ even when $\EE_P |X| = \infty$ for some $P \in \Pcal$?
\item Are $\Pcal$-UI conditions \emph{necessary} for $\Pcal$-uniform SLLNs to hold (in addition to being sufficient)? That is, if the condition in \eqref{eq:chung-uniform-integrability} does not hold, can it be shown that for some positive constant $C > 0$,
  \begin{equation}\label{eq:questions-iii}
    \lim_{m \to \infty} \sup_\Pin \Prob \left ( \sup_{k \geq m} \left \lvert \frac{1}{k} \sum_{i=1}^k \left ( X_i - \EE_P(X) \right ) \right \rvert \geq C \right ) > 0,
  \end{equation}
  with analogous questions in the case of higher or lower $\Pcal$-UI moments as in (i) and (ii)?
\item Does an analogue of Chung's SLLN exist for independent but \emph{non-identically distributed} random variables, such as in the sense of \cite[\S IX, Theorem 12]{petrov1975laws}? That is, can it be shown that
  \begin{equation}\label{eq:questions-iv}
    \lim_{m \to \infty} \sup_\Pin \Prob \left ( \supkm \left \lvert \frac{1}{a_k} \sum_{i=1}^k (X_i - \EE_P(X_i)) \right \rvert \geq \eps \right ) = 0
  \end{equation}
  for some appropriately chosen sequence $a_n \nearrow \infty$, and if so, under what conditions on $\infseqn{X_n}$?
\end{enumerate}
We provide positive answers to (i), (ii), (iii), and (iv) in Theorems~\ref{theorem:mz-slln}(i), \ref{theorem:mz-slln}(ii), \ref{theorem:mz-slln}(iii), and \ref{theorem:slln-noniid}, respectively. 


\begin{remark}[On centered versus uncentered uniform integrability]\label{remark:centered-vs-uncentered-UI}
As outlined by \cite[Remark 4.5]{ruf2022composite}, the assumption displayed in \eqref{eq:chung-uniform-integrability} is a minor refinement of \cite{chung_strong_1951} whose original result made the (stronger) uncentered $\Pcal$-UI assumption,
\begin{equation}\label{eq:uncentered-uniform-integrability}
 \lim_\mto \sup_\Pin \EE_P \left ( |X| \1 \{|X| > m \} \right ) = 0 
\end{equation}
in place of \eqref{eq:chung-uniform-integrability} but yielding the same conclusion in \eqref{eq:chung-slln}.
While the difference between \eqref{eq:chung-uniform-integrability} and \eqref{eq:uncentered-uniform-integrability} may seem minor --- indeed, for a single $P \in \Pcal$, $\EE_P|X| < \infty$ and $\EE_P|X - \EE_P(X)| < \infty$ are equivalent --- we highlight in \cref{theorem:mz-slln}(iii) how \eqref{eq:chung-uniform-integrability} is both sufficient \emph{and necessary} for the SLLN to hold, while the same cannot be said for \eqref{eq:uncentered-uniform-integrability}, drawing an important distinction between the two.
\end{remark}
\begin{remark}[On the phrase ``uniform integrability'']\label{remark:UI}
Note that the phrase ``uniform integrability'' is commonly used to refer to an analogue of \eqref{eq:chung-uniform-integrability} holding for a family of \emph{random variables} $\infseqn{X_n}$ on the \emph{same} probability space $(\Omega, \Fcal, P)$ (as in \cite[\S 4.5]{chung2001course}, \cite{chong1979theorem,chandra1992cesaro,hu2011note}, and \cite{hu2017note} among others) in the sense that
\begin{equation}
  \lim_\mto \sup_{k \in \NN} \EE_P \left ( |X_k - \EE_P(X_k)| \cdot \1 \{ |X_k - \EE_P(X_k)| > m \} \right ) = 0,
\end{equation}
while the presentation in \eqref{eq:chung-uniform-integrability} is a statement about a \emph{single} random variable on a \emph{collection} of probability spaces $(\Omega, \Fcal, \Pcal)$ (as also seen in \cite[pp. 93--94]{chow_integration_1997} and \cite[Section 4.2]{ruf2022composite}). Clearly, these two presentations communicate a similar underlying property, but they are used in conceptually different contexts and for this reason, we deliberately write ``$\Pcal$-UI'' to emphasize adherence to \eqref{eq:chung-uniform-integrability} and avoid ambiguity.
\end{remark}
The discussion surrounding \eqref{eq:equivalence-almost sure-time-uniform-highprob} motivates the following definition which summarizes, extends, and makes succinct Chung's notion of sequences that vanish both $\Pcal$-uniformly and almost surely.

\begin{definition}[Distribution-uniformly and almost surely vanishing sequences]
  \label{definition:dt-uniform}
  Let $\Pcal$ be a collection of distributions and $\infseqn{Y_n(P)}$ be random variables defined on $(\Omega, \Fcal, P)$ for each $P \in \Pcal$. We say that $\infseqn{Y_n} \equiv \infseqn{Y_n(P)}$ \uline{$\Pcal$-uniformly vanishes almost surely} if for any $\eps > 0$,
  \begin{equation}\label{eq:dt-uniform-convergence}
    \lim_{m \to \infty} \sup_{P \in \Pcal} \Prob \left (\sup_{k \geq m} |Y_k(P)| \geq \eps \right ) = 0,
  \end{equation}
  and as a shorthand for \eqref{eq:dt-uniform-convergence}, we write
  \begin{equation}\label{eq:shorthand}
    Y_n = \oPcalas(1).
  \end{equation}
  Moreover, for a monotonic and real sequence $(r_n)_{n=1}^\infty$, we say that $Y_n = \oPcalas(r_n)$ if $Y_n/r_n = \oPcalas(1)$.
\end{definition}
Clearly, if a sequence is $\Pcal$-uniformly vanishing almost surely, then it is both $\Pcal$-uniformly vanishing in probability for the same class $\Pcal$ as well as vanishing $P$-almost surely for every $P \in \Pcal$. Using the notation of Definition~\ref{definition:dt-uniform}, the desideratum in \eqref{eq:questions-i} can be rewritten as $\frac{1}{n}\sum_{i=1}^n (X_i - \EE(X)) = \oPcalas(n^{1/q-1})$ with similar presentations for \eqref{eq:questions-ii}--\eqref{eq:questions-iv}.

While Definition~\ref{definition:dt-uniform} is sufficient to \emph{state} our main results (presented in Theorems~\ref{theorem:mz-slln} and~\ref{theorem:slln-noniid}), intermediate steps of their proofs centrally rely on generalizing the notion of almost surely \emph{convergent} (but not necessarily vanishing) sequences to a class of distributions $\Pcal$ as well as a notion of distribution-uniform stochastic nonincreasingness. Indeed, classical proofs of SLLNs in the $P$-pointwise setting rely on showing that certain weighted sums are $P$-almost surely convergent (to potentially random quantities depending on $P$), to which the deterministic Kronecker lemma is applied on the same set of $P$-probability one to argue that an appropriate sequence \emph{vanishes}. To facilitate a similar argument uniformly in a class $\Pcal$, we introduce so-called ``$\Pcal$-uniform Cauchy sequences'' as well as $\Pcal$-uniformly stochastically nonincreasing sequences.
\begin{definition}[Distribution-uniform Cauchy sequence]
  \label{definition:distribution-uniform-cauchy-sequence}
  Let $\Pcal$ be a collection of distributions and $\infseqn{Y_n}$ a sequence of random variables defined on $(\Omega, \Fcal, \Pcal)$. We say that $\infseqn{Y_n}$ is a \uline{$\Pcal$-uniform Cauchy sequence} if for any $\eps > 0$,
  \begin{equation}
    \lim_{m \to \infty} \sup_\Pin \Prob \left ( \sup_{k,n \geq m} |Y_k - Y_n| \geq \eps \right ) = 0.
  \end{equation}
\end{definition}
It is easy to check that a $\Pcal$-uniform Cauchy sequence is $P$-almost surely a Cauchy sequence (and hence is $P$-almost surely convergent) for every $P \in \Pcal$. Moreover, if $Y_n - C = \oPcalas(1)$ for any fixed $C \in \RR$, then $\infseqn{Y_n}$ is $\Pcal$-uniformly Cauchy, but the limit point of a $\Pcal$-uniform Cauchy sequence can more generally be random and depend on the individual distributions $P \in \Pcal$. To control the distribution- and time-uniform stochastic nonincreasingness of sequences (which will routinely appear in the context of limit points of $\Pcal$-uniform Cauchy sequences), we introduce the following definition.
\begin{definition}[$\Pcal$-uniformly stochastically nonincreasing]
  \label{definition:stochastic-boundedness}
  Let $\infseqn{Y_n}$ be a sequence of random variables on $(\Omega,\Fcal, \Pcal)$. We say that $\infseqn{Y_n}$ is \uline{$\Pcal$-uniformly stochastically nonincreasing} if for every $\delta > 0$, there exists $B_\delta > 0$ so that for every $n \geq 1$,
  \begin{equation}\label{eq:stochastic-boundedness}
    \supP \Prob \left ( |Y_n| \geq B_\delta \right ) < \delta,
  \end{equation}
  and a shorthand for \eqref{eq:stochastic-boundedness}, we write
  \begin{equation}
    Y_n = \OPcalas(1).
  \end{equation}
  For a monotonic and real sequence $\infseqn{r_n}$, we say that $Y_n = \OPcalas(r_n)$ if $Y_n / r_n = \OPcalas(1)$.
\end{definition}
\cref{definition:stochastic-boundedness} should be contrasted with the weaker and more familiar notion of $\Pcal$-uniform \emph{asymptotic} stochastic boundedness which states that for every $\delta > 0$, there exist some $B_\delta > 0$ \emph{and} $N_\delta > 0$ so that for every $n \geq N_\delta$, \eqref{eq:stochastic-boundedness} holds. In this paper, we will only be concerned with \cref{definition:stochastic-boundedness} which will play an important role when applying \cref{lemma:kronecker} as the final step in the proofs of Theorems~\ref{theorem:mz-slln} and~\ref{theorem:slln-noniid}.

\subsection{Notation and conventions}

Let us now make explicit some notation and conventions that will be used throughout the paper.
\begin{itemize}
  \item Individual distributions are denoted by the capital letter $P$ and collections of distributions are denoted by calligraphic capital letters (typically $\Pcal$).
    \item We write ``$\Pcal$-UI'' (or simply ``UI'') for ``$\Pcal$-uniformly integrable'' when it is clear from context that the phrase is used as an adjective and ``$\Pcal$-uniform integrability'' when used as a noun.
  \item Collections of probability spaces are written as $(\Omega, \Fcal, \Pcal)$ to denote $(\Omega, \Fcal, P)_{P \in \Pcal}$.
  \item If the $q^\tth$ absolute central moment of $X$ is $\Pcal$-UI, i.e.
    \begin{equation}
      \lim_\mto \sup_\Pin \EE_P \left ( |X - \EE_P(X)|^q \1 \left \{ |X - \EE_P(X)|^q > m \right \} \right ) = 0,
    \end{equation}
    we condense this to ``the $q^\tth$ moment of $X$ is $\Pcal$-UI'' and omit the qualifiers ``absolute'' and ``central''.
  \item The phrase ``independent and identically distributed'' is abbreviated to ``\iid{}''.
  \item For real numbers $a, b \in \RR$, we use $a \land b$ to denote $\min\{a, b\}$ and $a \lor b$ to denote $\max\{a, b\}$. 
  \item We write $b_n \nearrow \infty$ for a real sequence $\infseqn{b_n}$ if it is nondecreasing and diverging to $\infty$.
  \item We omit the subscript $P$ from $\EE_P(X)$ when using the shorthand notation in \eqref{eq:shorthand}. For example, we write $\frac{1}{n}\sum_{i=1}^n X_i - \EE(X) = \oPcalas(1)$ if in fact
    \begin{equation}
      \forall \eps > 0,\quad\lim_\mto \sup_\Pin  \Prob \left ( \supkm |X_k - \EE_P(X)| \geq \eps \right )= 0.
    \end{equation}
  \item If $\EE_P|X_n|$ is finite for every $P \in \Pcal$ and every $n \in \NN$, we say that ``the SLLN holds at a rate of $\oPcalas(a_n / n)$'' to mean that $a_n^{-1} \sum_{i=1}^n (X_i - \EE(X_i)) = \oPcalas(1)$ for the sequence $a_n \nearrow \infty$. Similarly, if $\EE_P|X_n| = \infty$ for some $P \in \Pcal$ and some $n \in \NN$, then we use the same phrase ``the SLLN holds at a rate of $\oPcalas(a_n / n)$'' if $a_n^{-1} \sum_{i=1}^n X_i = \oPcalas(1)$. For example, \cite{chung_strong_1951} gives conditions under which the SLLN holds at a rate of $\oPcalas(1)$.
\end{itemize}

\subsection{Outline and summary of contributions}

Below we outline how the paper will proceed, summarizing our main contributions.

\begin{itemize}
\item \cref{section:slln} contains our main results --- Theorems~\ref{theorem:mz-slln}(i), \ref{theorem:mz-slln}(ii), \ref{theorem:mz-slln}(iii), and \ref{theorem:slln-noniid} --- which provide answers to the questions posed in \eqref{eq:questions-i}, \eqref{eq:questions-ii}, \eqref{eq:questions-iii}, and \eqref{eq:questions-iv}, respectively. In short, these theorems show that the SLLN holds at a rate of $\oPcalas(n^{1/q-1})$ in the \iid{} case \emph{if and only if} they have a $\Pcal$-UI $q^\tth$ moment, and that it holds at a rate of $\oPcalas(a_n / n)$ in the non-\iid{} case under certain $\Pcal$-uniform conditions on the sum $\sum_{k=1}^\infty \EE|X_k - \EE X_k |^q / a_k^q$ and its tails.

\item \cref{section:other-strong-laws} contains distribution-uniform analogues of several almost sure convergence results that are commonly used in the proofs of SLLNs. These include analogues of the Khintchine-Kolmogorov convergence theorem (\cref{section:khintchine-kolmogorov}), the Kolmogorov three-series theorem (\cref{section:kolmogorov-three-series}), Kronecker's lemma (\cref{section:kronecker}), and the Borel-Cantelli lemmas (\cref{section:borel-cantelli}). These results rely on the notion of a distribution-uniform Cauchy sequence, whose definition is provided in \cref{definition:distribution-uniform-cauchy-sequence} and which serves as a $\Pcal$-uniform generalization of a sequence that is $P$-almost surely convergent.

\item \cref{section:complete proofs} contains complete proofs to Theorems~\ref{theorem:mz-slln} and~\ref{theorem:slln-noniid}. After considering the ``right'' generalizations of distribution-uniform convergence (in Definitions~\ref{definition:dt-uniform} and~\ref{definition:distribution-uniform-cauchy-sequence}), the high-level structure of the proofs to Theorems~\ref{theorem:mz-slln}(i), \ref{theorem:mz-slln}(ii), and \ref{theorem:slln-noniid} largely mirror those of their $P$-pointwise counterparts due to Kolmogorov, Marcinkiewicz, and Zygmund in the sense that they use analogous technical theorems and lemmas from \cref{section:other-strong-laws} in similar succession. One exception to this is the combination of Kolmogorov's three-series theorem and Kronecker's lemma --- certain subtleties surrounding uniform stochastic nonincreasingness of $\Pcal$-uniform Cauchy sequences requires the introduction of another three-series theorem provided in \cref{theorem:boundedness three series theorem}. Furthermore, our proofs noticeably deviate from their $P$-pointwise counterparts in satisfying the conditions of our $\Pcal$-uniform three series theorems (Theorems~\ref{theorem:kolmogorov-three-series} and~\ref{theorem:boundedness three series theorem}). These require additional care in both cases, relying for example on a few delicate applications of the de la Vall\'ee Poussin criterion of uniform integrability; details can be found in Lemmas~\ref{lemma:sufficient-conditions-puniform-kolmogorov-three-series-iid}--\ref{lemma:satisfying uniform boundedness three series for q smaller than 1}.

\item \cref{section:application} illustrates an application of \cref{theorem:mz-slln}(i) to the derivation of rates of uniform consistency in the statistical problem of variance estimation. While the literature has seen statistical applications relying on quantitative rates of strong consistency of the sample variance, they have thus far been distribution-\emph{pointwise} results; to the best of our knowledge, \cref{proposition:estimation} is the first to quantify such rates uniformly in a class of distributions.
\end{itemize}

\section{Distribution-uniform strong laws of large numbers}\label{section:slln}
We begin by presenting our first main result which gives both necessary and sufficient conditions for the SLLN to hold at a rate of $\oPcalas(n^{1/q - 1})$ in the \iid{} setting, providing answers to the questions posed in \eqref{eq:questions-i}, \eqref{eq:questions-ii}, and \eqref{eq:questions-iii}.

\begin{theorem}[$\Pcal$-uniform Marcinkiewicz-Zygmund strong law of large numbers]\label{theorem:mz-slln}
  Let $\infseqn{X_n}$ be independent and identically distributed random variables. Consider the following $\Pcal$-UI condition for some $0 < q < 2$:
  \begin{equation}\label{eq:uniform-integrability-qth-moment}
    \lim_{m \to \infty}\sup_\Pin \EE_P \left ( |X - \mu(P;q)|^{q} \1{\{ |X - \mu(P;q)|^q > m \}} \right ) = 0,
  \end{equation}
  where $\mu(P; q) = \EE_P(X)$ if $1 \leq q < 2$ and $\mu(P; q) = 0$ if $0 < q< 1$.
  \begin{enumerate}[label = (\roman*)]
  \item If \eqref{eq:uniform-integrability-qth-moment} holds with $q \in [1,2)$, then
  \begin{equation}\label{eq:mz-slln-q-in-1,2}
    \frac{1}{n^{1/q}}\sum_{i=1}^n (X_i - \EE(X)) = \oPcalas \left ( 1 \right ).
  \end{equation}
  \item If \eqref{eq:uniform-integrability-qth-moment} holds with $q \in (0,1)$, then
  \begin{equation}
    \frac{1}{n^{1/q}}\sum_{i=1}^n X_i = \oPcalas \left ( 1 \right ).
  \end{equation}
\item If \eqref{eq:uniform-integrability-qth-moment} does \uline{not} hold, then
  \begin{equation}\label{eq:slln-doesnt-hold}
    \frac{1}{n^{1/q}}\sum_{i=1}^n (X_i - \mu(P;q)) \neq \oPcalas \left ( 1 \right ).
  \end{equation}
  \end{enumerate}
  In other words, the $\Pcal$-uniform SLLN holds for the average $\frac{1}{n}\sum_{i=1}^n X_i$ with a rate of $\oPcalas(n^{1/q - 1})$ \uline{if and only if} the $q^\tth$ moment of $X$ is $\Pcal$-UI.
\end{theorem}
In the same way that Chung's $\Pcal$-uniform SLLN for UI first moments generalizes Kolmogorov's $P$-pointwise SLLN for finite first moments, \namecref{theorem:mz-slln}s~\ref{theorem:mz-slln}(i) and \ref{theorem:mz-slln}(ii) generalize the Marcinkiewicz-Zygmund \citep{marcinkiewicz1937fonctions} $P$-pointwise SLLN for finite $q^\tth$ moments when $0<q<2;\ q\neq 1$, painting a fuller picture of sufficiency for $\Pcal$-uniform SLLNs in the \iid{} case.

Turning to \cref{theorem:mz-slln}(iii), the \emph{necessity} of $\Pcal$-UI appears to be new to the literature even in the case of $q = 1$. In fact, Chung's original paper \citep{chung_strong_1951} studied necessary conditions for the $\Pcal$-uniform SLLN but only considered \emph{uncentered} UI as in \eqref{eq:uncentered-uniform-integrability} which turns out not to be necessary in general. Concretely, he showed that if the SLLN in \eqref{eq:mz-slln-q-in-1,2} holds for $q = 1$ \emph{and} the median of $X$ is uniformly bounded, then the uncentered $\Pcal$-UI condition in \eqref{eq:uncentered-uniform-integrability} holds; in other words, if $\sup_\Pin |\med_P(X)| < \infty$ where $\med_P(X) := \sup\{x : \Prob(X \leq x) \leq 1/2 \}$, then
\begin{equation}
  \frac{1}{n} \sum_{i=1}^n (X_i - \EE(X)) = \oPcalas(1)  \implies \lim_\mto \sup_\Pin \EE_P(|X| \1\{ |X| > m \}) = 0.
\end{equation}
\cite[Remark 2]{chung_strong_1951} uses a simple counterexample to point out that without uniform boundedness of the medians, uncentered $\Pcal$-UI is \emph{not} necessary. Indeed, letting $\Pcal_\NN := \{P_n : n \in \NN \}$ where $P_n$ is the distribution of $X$ with a point mass at $x = n$, we obviously have that the uniform SLLN holds (since the centered sample average is always 0 with $P$-probability one for all $P \in \Pcal_\NN$) and yet $X$ does not satisfy uncentered $\Pcal_\NN$-uniform integrability. Clearly, this counterexample does not apply to the \emph{centered} uniform integrability condition we are considering in \eqref{eq:uniform-integrability-qth-moment}.

\cref{theorem:mz-slln}(iii) also highlights that uniform \emph{boundedness} of the $q^\tth$ moment is not sufficient for the SLLN to hold $\Pcal$-uniformly at a rate of $o(n^{1/q - 1})$. Going further, by the de la Vall\'ee Poussin criterion for uniform integrability \citep{chong1979theorem}, the SLLN holding uniformly at this rate is \emph{equivalent} to the uniform boundedness of $\EE_P \varphi(|X|^q)$ for some positive and nondecreasing function $\varphi$ growing faster than $x \mapsto x$, i.e.~$\lim_{x \to \infty} \varphi(x) / x = \infty$.
Let us now give rough outlines of the proofs of Theorems~\ref{theorem:mz-slln}(i), \ref{theorem:mz-slln}(ii), and \ref{theorem:mz-slln}(iii) (with a diagrammatic overview of the former displayed in \cref{fig:thm1i}), leaving most technical details for \cref{proof:mz-slln}.

\begin{figure}[!ht]
  \begin{center}
    \begin{tikzpicture}[>=Stealth, node distance=0.5cm]
      \node (A) {\cref{theorem:mz-slln}(i)};
      \node (B) [above=of A, align=center] {$\Pcal$-Kronecker's lemma \\ (\cref{lemma:kronecker})};
      \node (C) [above left=of B, align=center] {$\Pcal$-Kolmogorov three series theorem \\ (\cref{theorem:kolmogorov-three-series})};
      \node (D) [above right=of B, align=center] {$\Pcal$-nonincreasing three series theorem \\ (\cref{theorem:boundedness three series theorem})};
      \node (F) [above=of C, align=center, text width=6cm] {$\Pcal$-UI of $q^\tth$ moment is sufficient for \cref{theorem:kolmogorov-three-series} \\ (\cref{lemma:sufficient-conditions-puniform-kolmogorov-three-series-iid})};
      \node (G) [above=of D, align=center, text width=6cm] {$\Pcal$-UI of $q^\tth$ moment is sufficient for \cref{theorem:boundedness three series theorem} \\ (\cref{lemma:satisfying uniform boundedness three series for q bigger than 1})};

      \draw[->] (B) -- (A);
      \draw[->] (C) -- (B);
      \draw[->] (D) -- (B);
      \draw[->] (F) -- (C);
      \draw[->] (G) -- (D);
    \end{tikzpicture}
  \end{center}
  \caption{A diagrammatic summary of the theorems and lemmas required to prove \cref{theorem:mz-slln}(i). Note that when $\Pcal = \{P\}$ is a singleton, the proof due to \cite{marcinkiewicz1937fonctions} only involves the left branch (\cref{lemma:sufficient-conditions-puniform-kolmogorov-three-series-iid}$\to$\cref{theorem:kolmogorov-three-series}$\to$\cref{lemma:kronecker}) since the right one is trivially satisfied in that case (more discussion can be found in \cref{section:boundedness-in-probability-iid}). The above structure similarly applies to the proof of \cref{theorem:mz-slln}(ii) but we replace Lemmas~\ref{lemma:sufficient-conditions-puniform-kolmogorov-three-series-iid} and~\ref{lemma:satisfying uniform boundedness three series for q bigger than 1} with Lemmas~\ref{lemma:sufficient-conditions-m-z-qlessthan1} and~\ref{lemma:satisfying uniform boundedness three series for q smaller than 1} for the sake of satisfying the conditions of Theorems~\ref{theorem:kolmogorov-three-series} and \ref{theorem:boundedness three series theorem}, respectively.}
  \label{fig:thm1i}
\end{figure}

  \begin{proof}[Proof outline of \cref{theorem:mz-slln}(i)]
    Since $q = 1$ corresponds to the SLLN of \cite{chung_strong_1951}, we focus on $1 < q < 2$.
    Similar to classical SLLN proofs, we focus our attention on the weighted random variables $\infseqn{Z_n}$ given by $Z_n := (X_n - \EE_P(X_n)) / n^{1/q}$.
    First, in \cref{theorem:kolmogorov-three-series} we develop a $\Pcal$-uniform analogue of the Kolmogorov three-series theorem which states that if for some $c > 0$,
  \begin{equation}\label{eq:theorem1-proofsketch-threeseries}
    \lim_{m \to \infty}\sup_{P \in \Pcal}\sum_{n=m}^\infty \left \lvert \EE_P Z_n^\leqc \right \rvert = 0,\quad \lim_{m \to \infty}\sup_{P \in \Pcal}\sum_{n=m}^\infty \Var_P Z_n^\leqc = 0, ~~\text{and}~~ \lim_{m \to \infty}\sup_{P \in \Pcal}\sum_{n=m}^\infty \Prob(|Z_n| > c) = 0,
  \end{equation}
  where $Z_n^\leqc := Z_n \1\{Z_n \leq c\}$, then $S_n := \sum_{i=1}^n Z_i$ is a $\Pcal$-uniform Cauchy sequence. 
  Indeed, \cref{lemma:sufficient-conditions-puniform-kolmogorov-three-series-iid} focuses on exploiting $\Pcal$-UI of the $q^\tth$ moment to show that \eqref{eq:theorem1-proofsketch-threeseries} holds with $c = 1$.

  We then introduce the $\Pcal$-uniform stochastic Kronecker lemma (\cref{lemma:kronecker}) which states that if $S_n$ is $\Pcal$-uniformly Cauchy \emph{and} $\Pcal$-uniformly stochastically nonincreasing, then for any $b_n \nearrow \infty$, we have
  \begin{equation}
    \frac{1}{b_n} \sum_{i=1}^n b_i Z_i = \oPcalas(1). 
  \end{equation}
  To apply \cref{lemma:kronecker} to our setting, we show that $S_n$ is $\Pcal$-uniformly Cauchy as a consequence of the three-series theorem discussed above combined with \cref{lemma:sufficient-conditions-puniform-kolmogorov-three-series-iid}, and to show that $S_n$ is $\Pcal$-uniformly stochastically nonincreasing, we introduce another three-series-type theorem in \cref{theorem:boundedness three series theorem} which states that if
  \begin{align}
    &\lim_{B \to \infty}\sup_{P \in \Pcal}\sum_{n=1}^\infty \left \lvert \EE_P \left [ \left ( Z_n / B \right ) \1\{|Z_n/B| \leq 1\} \right ] \right \rvert = 0,\\
    &\lim_{B \to \infty}\sup_{P \in \Pcal}\sum_{n=1}^\infty \Var_P \left [ (Z_n/B) \1\{|Z_n/B| \leq 1\} \right ] = 0,~~\text{and}~~\\
    &\lim_{B \to \infty}\sup_{P \in \Pcal}\sum_{n=1}^\infty \Prob(|Z_n/B| > 1) = 0,
  \end{align}
  then $S_n := \sum_{i=1}^n Z_i $ is $\Pcal$-uniformly stochastically nonincreasing in the sense of \cref{definition:stochastic-boundedness}. \cref{lemma:satisfying uniform boundedness three series for q bigger than 1} indeed shows that the above three series conditions are satisfied as long as $X$ has a $\Pcal$-UI $q^\tth$ moment.
  Taking the sequence $\infseqn{b_n}$ to be given by $b_n = n^{1/q}$ and invoking the $\Pcal$-uniform Kronecker lemma yields the desired result:
  \begin{equation}
    \frac{1}{n^{1/q}} \sum_{i=1}^n (X_i - \EE(X)) = \oPcalas(1),
  \end{equation}
  which completes the proof outline of \cref{theorem:mz-slln}(i).
\end{proof}
\begin{proof}[Proof outline of \cref{theorem:mz-slln}(ii)]
  The proof in the case of $0 < q < 1$ proceeds in the same manner as that of $1 < q < 2$ but instead of \cref{lemma:sufficient-conditions-puniform-kolmogorov-three-series-iid} showing that the three-series conditions in \eqref{eq:theorem1-proofsketch-threeseries} are satisfied for $(X_n - \EE_P(X_n)) / n^{1/q}$, it is \cref{lemma:sufficient-conditions-m-z-qlessthan1} that shows that these conditions are satisfied for $X_n / n^{1/q}$, thereby demonstrating that $\sum_{k=1}^n X_k / k^{1/q}$ is $\Pcal$-uniformly Cauchy. Similarly, rather than using \cref{lemma:satisfying uniform boundedness three series for q bigger than 1} to satisfy the $\Pcal$-uniform stochastic nonincreasingness three series above, we use \cref{lemma:satisfying uniform boundedness three series for q smaller than 1}. Again, invoking the $\Pcal$-uniform stochastic Kronecker lemma yields the desired result.
\end{proof}
\begin{proof}[Proof outline of \cref{theorem:mz-slln}(iii)]
  We will describe the proof outline for the case where $1 \leq q < 2$ but a similar argument goes through for $0 < q < 1$ (with all details provided in \cref{proof:mz-slln}). The proof relies on a $\Pcal$-uniform generalization of the second Borel-Cantelli lemma (\cref{lemma:second-borel-cantelli}) which states that for independent events $\infseqn{E_n}$ in $\Fcal$, if the tails of the sums of $\infseqn{\Prob(E_n)}$ do not uniformly vanish, then the probability of the tails of the \emph{unions} of $\infseqn{E_n}$ do not uniformly vanish; more succinctly:
  \begin{equation}\label{eq:theorem1-proof-outline-second-borel-cantelli}
    0 < \lim_{m \to \infty} \supP \sum_{k=m}^\infty \Prob \left ( E_k \right ) \leq \infty \implies \lim_{m \to \infty} \sup_\Pin \Prob \left ( \bigcup_{k =m}^\infty E_k \right ) > 0.
  \end{equation}
  To make use of this lemma, we highlight that for any $P \in \Pcal$,
  \begin{equation}\label{eq:theorem1-proof-outline-lower-bound-on-crossing-prob}
    \Prob \left ( \sup_{k \geq m-1} \frac{1}{k^{1/q}} |S_{k}| \geq 1/2 \right ) \geq \Prob \left ( \supkm \frac{1}{k^{1/q}} |X - \EE_P(X)| \geq 1 \right )
  \end{equation}
  where $S_k := \sum_{i=1}^k (X_i - \EE_P(X_i))$ are the centered partial sums, and hence once paired with \eqref{eq:theorem1-proof-outline-second-borel-cantelli}, it suffices to show that
  \begin{equation}\label{eq:theorem1-proofoutline-vanishing-tail-of-sum-of-tail-probabilities}
    0 < \lim_\mto \sup_\Pin \sum_{k=m}^\infty \Prob \left ( |X_k - \EE_P(X_k)|^q > k  \right ) \leq \infty.
  \end{equation}
  Indeed by \cite[Theorem 2.1]{hu2017note}, \eqref{eq:theorem1-proofoutline-vanishing-tail-of-sum-of-tail-probabilities} is \emph{equivalent} to the $\Pcal$-UI condition in \eqref{eq:uniform-integrability-qth-moment} being violated, i.e.~\eqref{eq:theorem1-proofoutline-vanishing-tail-of-sum-of-tail-probabilities} holds if and only if
  \begin{equation}\label{eq:theorem1-proofoutline-uniform-integrability}
    \lim_{m \to \infty}\sup_\Pin \EE_P \left ( |X - \EE_P(X) |^q \1{\{ |X - \EE_P(X)|^q > m \}} \right ) > 0,
  \end{equation}
  which completes the proof outline of \cref{theorem:mz-slln}(iii).
\end{proof}

Let us now consider the setting of independent but \emph{non-identically distributed} random variables. The following theorem serves as a distribution-uniform generalization of the well-known strong law of large numbers for independent random variables (see \cite[\S IX, Theorem 12]{petrov1975laws}).

\begin{theorem}[$\Pcal$-uniform strong law for non-identically distributed random variables]\label{theorem:slln-noniid}
  Let $\infseqn{X_n}$ be independent random variables and suppose that for some $q \in [1,2]$ and some $a_n \nearrow \infty$, they satisfy
  \begin{equation}\label{eq:slln-noniid-moment-condition}
    \sup_\Pin \sum_{k=1}^\infty  \frac{\EE_P |X_k - \EE_P X_k|^q}{a_k^q} < \infty~~\text{and}~~\lim_{m \to \infty}\sup_\Pin \sum_{k=m}^\infty  \frac{\EE_P |X_k - \EE_P X_k|^q}{a_k^q} = 0.
  \end{equation}
  Then the SLLN holds $\Pcal$-uniformly at a rate of $\oPcalas(a_n / n)$, meaning for any $\eps > 0$,
  \begin{equation}
    \lim_{m \to \infty} \sup_\Pin \Prob \left ( \supkm \left \lvert \frac{1}{a_k} \sum_{i=1}^k (X_i - \EE_P(X_i)) \right \rvert \geq \eps \right ) = 0.
  \end{equation}
\end{theorem}
\begin{proof}[Proof outline of \cref{theorem:slln-noniid}]
  The proof of \cref{theorem:slln-noniid} is identical to that of \cref{theorem:mz-slln}(i) but instead of using \cref{lemma:sufficient-conditions-puniform-kolmogorov-three-series-iid} and \cref{lemma:satisfying uniform boundedness three series for q bigger than 1} to satisfy the conditions of the $\Pcal$-uniform Kolmogorov and stochastic nonincreasingness three-series theorems (Theorems~\ref{theorem:kolmogorov-three-series} and~\ref{theorem:boundedness three series theorem}), we use different arguments found in Lemmas~\ref{lemma:sufficient-conditions-puniform-kolmogorov-three-series} and \ref{lemma:sufficient-conditions-puniform-boundedness-noniid}, respectively. (In fact, the latter two lemmas are simpler and require much softer arguments.) This completes the proof outline of \cref{theorem:slln-noniid}.
\end{proof}

After some inspection, the reader will notice that when instantiated in the identically distributed setting, \cref{theorem:slln-noniid} does not recover \cref{theorem:mz-slln}, meaning that it cannot attain an SLLN rate as fast as $o(n^{1/q - 1})$ in the presence of only $q \in [1,2)$ $\Pcal$-UI moments. This is not surprising and it directly mirrors the relationship between the $P$-pointwise non-\iid{} SLLNs \citep[\S IX, Theorem 12]{petrov1975laws} and the strong laws of Kolmogorov, Marcinkiewicz, and Zygmund in the \iid{} case. The latter proofs in the $P$-pointwise case (and now ours provided in \cref{proof:mz-slln} for the $\Pcal$-uniform case) all crucially exploit the fact that $\infseqn{X_n}$ are identically distributed. Finally, notice that in the $P$-pointwise case, the two conditions in \eqref{eq:slln-noniid-moment-condition} collapse into one, requiring that $\EE_P|X_k - \EE_P X_k|^q / a_k^q$ is summable.

As alluded to in the proof outlines of Theorems~\ref{theorem:mz-slln} and~\ref{theorem:slln-noniid}, our results rely on $\Pcal$-uniform analogues of several familiar almost sure convergence results. We present all of these in the next section.

\section{Other distribution-uniform convergence results}\label{section:other-strong-laws}

In this section, we provide $\Pcal$-uniform analogues of various almost sure convergence results including the Khintchine-Kolmogorov convergence theorem, the Kolmogorov three-series theorem, and a stochastic generalization of Kronecker's lemma. These are instrumental to the proofs of Theorems~\ref{theorem:mz-slln} and~\ref{theorem:slln-noniid}. Note that the classical ($P$-pointwise) forms of these results are stated (either in their assumptions or in their conclusions) in terms of a sequence of random variables \emph{converging} $P$-almost surely, and hence we will rely heavily on the notion of $\Pcal$-uniform Cauchy sequences provided in Definition~\ref{definition:distribution-uniform-cauchy-sequence}.
We begin in the following section with a $\Pcal$-uniform generalization of the Khintchine-Kolmogorov convergence theorem.
\subsection{A distribution-uniform Khintchine-Kolmogorov convergence theorem}\label{section:khintchine-kolmogorov}
In the classical $P$-pointwise case, the Khintchine-Kolmogorov convergence theorem states that for a sequence of independent random variables $\infseqn{X_n}$, if the sum of their variances is finite, i.e.
\begin{equation}\label{eq:khintchine-kolmogorov-pointwise-condition}
    \sum_{k=1}^\infty \Var_P(X_k) < \infty,
\end{equation}
then $\sum_{k=1}^\infty X_k$ is $P$-almost surely finite. 
With \cref{definition:distribution-uniform-cauchy-sequence} in mind, we are ready to state and prove a $\Pcal$-uniform generalization of the Khintchine-Kolmogorov convergence theorem, establishing that the sum $\sum_{k=1}^\infty X_k$ is $\Pcal$-uniformly Cauchy whenever the series in \eqref{eq:khintchine-kolmogorov-pointwise-condition} has $\Pcal$-uniformly vanishing tails.
\begin{theorem}[$\Pcal$-uniform Khintchine-Kolmogorov convergence theorem]\label{theorem:khintchine-kolmogorov}
  Let $\infseqn{X_n}$ be independent random variables on $(\Omega, \Fcal, \Pcal)$. If
  \begin{equation}\label{eq:petrovlemma8-statement-uniform-tail-decay}
   \lim_{m \to \infty}\sup_{P \in \Pcal}\sum_{k=m}^{\infty} \Var_P X_k = 0 
  \end{equation}
  then $S_n := \sum_{i=1}^n (X_i - \EE_P(X_i))$ is $\Pcal$-uniformly Cauchy  (\cref{definition:distribution-uniform-cauchy-sequence}).
\end{theorem}
\begin{proof}
  First note that for any $m \geq 1$, we have that
  \begin{equation}
    \left \{ \sup_{k,n \geq m} |S_k - S_n| \geq \eps \right \} \subseteq \left \{ \sup_{k \geq m} |S_k - S_m| \geq \eps / 2 \right \} \cup  \left \{\sup_{n \geq m} |S_n - S_m| \geq \eps/2 \right \}
  \end{equation}
  and hence for any $P \in \Pcal$, Kolmogorov's inequality yields
  \begin{align}
    \Prob \left ( \sup_{k, n \geq m} |S_k - S_n| \geq \eps \right ) &\leq \Prob \left ( \sup_{k \geq m} |S_k - S_m| \geq \eps/2 \right ) + \Prob \left ( \sup_{n \geq m} |S_n - S_m| \geq \eps/2 \right )\\
    &\leq \frac{8}{\eps^2} \cdot \sum_{k=m+1}^\infty \Var_P(X_k).
  \end{align}
  Taking suprema over $P \in \Pcal$ and limits as $m \to \infty$ and noting the condition in \eqref{eq:petrovlemma8-statement-uniform-tail-decay} yields the desired result, completing the proof.
\end{proof}

\subsection{A distribution-uniform Kolmogorov three-series theorem}\label{section:kolmogorov-three-series}
Now that we have a $\Pcal$-uniform Khintchine-Kolmogorov convergence theorem, we will use it to prove a $\Pcal$-uniform analogue of Kolmogorov's three-series theorem. To begin, define the truncated version $X^\leqc$ of a random variable $X$ at a constant $c$ as
\begin{equation}
  X^\leqc := X \cdot \1\{|X| \leq c\}.
\end{equation}
In the $P$-pointwise case, recall that Kolmogorov's three-series theorem states that if the following three series are convergent for some $c > 0$:
\begin{equation}\label{eq:kolmogorov-three-series-pointwise-condition}
  \sum_{n=1}^\infty \EE_P X_n^\leqc,\quad \sum_{n=1}^\infty \Var_P X_n^\leqc,~~\text{and}~~ \sum_{n=1}^\infty \Prob(|X_n| > c),
\end{equation}
then $\sum_{k=1}^\infty X_k$ is $P$-almost surely convergent. Similarly to \cref{theorem:khintchine-kolmogorov} in the previous section, our $\Pcal$-uniform analogue of Kolmogorov's three series theorem will conclude that $\sum_{k=1}^\infty X_k$ is $\Pcal$-uniformly Cauchy as long as the tails of a certain three series are $\Pcal$-uniformly vanishing. 
\begin{theorem}[$\Pcal$-uniform Kolmogorov three-series theorem]\label{theorem:kolmogorov-three-series}
  Let $\infseqn{X_n}$ be a sequence of independent random variables. Suppose that the following three summation tails decay $\Pcal$-uniformly for some $c > 0$:
  \begin{equation}
    \lim_{m \to \infty}\sup_{P \in \Pcal}\sum_{n=m}^\infty \left \lvert \EE_P X_n^\leqc \right \rvert = 0,\quad \lim_{m \to \infty}\sup_{P \in \Pcal}\sum_{n=m}^\infty \Var_P X_n^\leqc = 0,~~\text{and}~~ \lim_{m \to \infty}\sup_{P \in \Pcal}\sum_{n=m}^\infty \Prob(|X_n| > c) = 0.\nonumber
  \end{equation}
  Then $S_n := \sum_{i=1}^n X_i$ is $\Pcal$-uniformly Cauchy.
\end{theorem}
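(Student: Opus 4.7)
The plan is to mimic the classical proof of Kolmogorov's three-series theorem, but replacing pointwise almost-sure statements by $\Pcal$-uniform Cauchy statements, and feeding into the already-proven $\Pcal$-uniform Khintchine--Kolmogorov convergence theorem (\cref{theorem:khintchine-kolmogorov}). Throughout, write $Y_n := X_n^{\leqc}$ for the truncated variables and $T_n := \sum_{i=1}^n Y_i$.

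First, the truncated variables $Y_n$ are bounded (so have finite variance) and inherit independence from $X_n$. The second hypothesis $\sup_{P \in \Pcal}\sum_{n \geq m}\Var_P Y_n \to 0$ is exactly the hypothesis of \cref{theorem:khintchine-kolmogorov} applied to $\infseqn{Y_n}$, which therefore delivers that the centered sums $\widetilde T_n := \sum_{i=1}^n (Y_i - \EE_P Y_i)$ form a $\Pcal$-uniform Cauchy sequence.

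Second, remove the centering using the first hypothesis. The deterministic real sequence $M_n(P) := \sum_{i=1}^n \EE_P Y_i$ satisfies, for any $k \geq n \geq m$,
\begin{equation*}
  |M_k(P) - M_n(P)| \;=\; \Bigl|\sum_{i=n+1}^k \EE_P Y_i\Bigr| \;\leq\; \sum_{i=m}^\infty |\EE_P Y_i|,
\end{equation*}
and the right-hand side tends to $0$ as $m\to\infty$ uniformly in $P\in\Pcal$. Thus $\sup_{k,n\geq m}|M_k(P)-M_n(P)|$ vanishes uniformly in $P$, and since $T_n = \widetilde T_n + M_n(P)$, combining with the previous step yields that $T_n$ itself is $\Pcal$-uniformly Cauchy.

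Third, transfer from $T_n$ to $S_n$ using the third series. Let $B_m := \bigcup_{n\geq m} \{|X_n|>c\}$; on the complement $B_m^c$ we have $X_n = Y_n$ for every $n\geq m$, so $S_k - S_n = T_k - T_n$ whenever $k,n\geq m$. By the union bound and the third hypothesis,
\begin{equation*}
  \supP \PP_P(B_m) \;\leq\; \supP \sum_{n=m}^\infty \PP_P(|X_n|>c) \;\longrightarrow\; 0 \quad \text{as } m\to\infty.
\end{equation*}
Consequently
\begin{equation*}
  \PP_P\!\left(\sup_{k,n\geq m}|S_k - S_n|\geq \eps\right) \;\leq\; \PP_P\!\left(\sup_{k,n\geq m}|T_k - T_n|\geq \eps\right) + \PP_P(B_m),
\end{equation*}
and taking $\supP$ followed by $m\to\infty$ finishes the proof. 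The argument is essentially routine once \cref{theorem:khintchine-kolmogorov} is in hand; the only mild subtlety is the alignment in the last step, where one must observe that confining attention to $k,n\geq m$ (not starting the sums at $1$) is exactly what makes the event $B_m^c$ enough to identify the two tail increments $S_k-S_n$ and $T_k-T_n$. No distribution-uniform Borel--Cantelli lemma is needed here, because the third hypothesis already provides uniform decay of $\PP_P(B_m)$ directly by union bound.
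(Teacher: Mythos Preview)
Your proof is correct and follows essentially the same approach as the paper: both arguments apply the $\Pcal$-uniform Khintchine--Kolmogorov theorem to the centered truncated sums via the second series, handle the deterministic centering terms via the first series, and transfer from truncated to original partial sums via the third series and a union bound. The only cosmetic difference is the order of presentation---the paper reduces to truncated sums first and then splits into centered and expectation parts, whereas you build up from centered truncated sums to $T_n$ to $S_n$---but the substance is identical.
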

Notice that the first series in \eqref{eq:kolmogorov-three-series-pointwise-condition} does not have an exact analogue in \cref{theorem:kolmogorov-three-series} since the former is not a sum of \emph{absolute values} of $(\EE_P X_n^\leqc)_{n=1}^\infty$ while that of the latter is. In particular, \cref{theorem:kolmogorov-three-series} is not a \emph{strict} generalization of Kolmogorov's three-series theorem in general, but this distinction is inconsequential for the sake of proving ($\Pcal$-uniform or $P$-pointwise) SLLNs, at least in the \iid{} and independent but non-\iid{} settings considered by Kolmogorov, Marcinkiewicz, and Zygmund, as well as \cite[Theorem 12]{petrov1975laws}. Indeed, all of their (and our) proofs ultimately upper bound $\EE_P X_n^\leqc$ for a mean-zero $X_n$ by $\EE_P(|X_n| \cdot \1\{|X_n| \leq c\})$ or by $\EE_P(|X_n| \cdot \1\{|X_n| > c\})$, and hence one can simply analyze $|\EE_P X_n^\leqc|$ from the outset. Detailed discussions and proofs can be found in \cref{section:sufficient-conditions-puniform-kolmogorov-three-series-iid}. Let us now return to and prove \cref{theorem:kolmogorov-three-series}.
\begin{proof}
  Abusing notation slightly, let $S_n^\leqc := \sum_{i=1}^n X_i^\leqc$. Note that for any $m \geq 1$ and any $\eps > 0$, we have 
  \begin{align}
    &\sup_{P \in \Pcal} \Prob \left ( \sup_{n, k \geq m}\left \lvert S_{n} - S_k  \right \rvert \geq \eps \right )\\
    \ifverbose 
    =\ &\sup_{P \in \Pcal} \Prob \left ( \sup_{n \geq k \geq m}\left \lvert S_{n} - S_k  \right \rvert \geq \eps \right )\\
    \leq\ &\sup_\Pin \Prob \left ( \sup_{n \geq k \geq m}\left \lvert S_{n}^\leqc - S_k^\leqc  \right \rvert \geq \eps \right ) + \sup_\Pin \Prob \left ( \exists k \geq m : X_k \neq X_k^\leqc \right )\\
    \fi 
    \leq\ & \sup_\Pin \Prob \left ( \sup_{n \geq k \geq m}\left \lvert S_{n}^\leqc - S_k^\leqc  \right \rvert \geq \eps \right ) +\sup_\Pin \sum_{k=m}^\infty\Prob \left ( |X_k| > c \right ).
  \end{align}
  The second term above vanishes asymptotically by the third series, so it suffices to show that the first term goes to 0 as $m \to \infty$. Indeed,
  \begin{align}
    &\sup_\Pin \Prob \left ( \sup_{n \geq k \geq m}\left \lvert S_{n}^\leqc - S_k^\leqc  \right \rvert \geq \eps \right ) \\
    \ifverbose 
    =\ &\sup_\Pin \Prob \left ( \sup_{n \geq k \geq m}\left \lvert \sum_{i=1}^n X_i\1(|X_i| \leq c) - \sum_{i=1}^k X_i \1(|X_i| \leq c)  \right \rvert \geq \eps \right )\\
    =\ &\sup_\Pin \Prob \left ( \sup_{n \geq k \geq m}\left \lvert S_n^\leqc - \EE_P S_n^\leqc + \EE_P S_n^\leqc - \EE_P S_k^\leqc + \EE_P S_k^\leqc - S_k^\leqc  \right \rvert \geq \eps \right )\\
    \fi 
    \leq\ &\underbrace{\sup_\Pin \Prob \left ( \sup_{n \geq k \geq m}\left \lvert S_n^\leqc - \EE_P S_n^\leqc - (S_k^\leqc - \EE_P(S_k^\leqc)) \right \rvert \geq \eps/2 \right )}_{(\star)} +\\
    & \underbrace{\sup_\Pin \1 \left \{ \sup_{k \geq n \geq m} |\EE_P S_n^\leqc - \EE_P S_k^\leqc| \geq \eps/2 \right \}}_{(\dagger)}.
  \end{align}
 Now, $(\star) \to 0$ by the second series combined with the $\Pcal$-uniform Khintchine-Kolmogorov convergence theorem (\cref{theorem:khintchine-kolmogorov}). Turning to $(\dagger)$, we have
  \begin{align}
    &\sup_\Pin \1 \left \{ \sup_{k \geq n \geq m} \left \lvert \EE_P S_n^\leqc - \EE_P S_k^\leqc \right \rvert \geq \eps / 2 \right \} \\
    \ifverbose 
    \leq\ &\sup_\Pin \1 \left \{ \sup_{k \geq m} \left \lvert \EE_P S_k^\leqc - \EE_P S_m^\leqc \right \rvert \geq \eps / 4 \right \} + \sup_\Pin \1 \left \{ \sup_{n \geq m} \left \lvert \EE_P S_n^\leqc - \EE_P S_m^\leqc \right \rvert \geq \eps / 4 \right \} \\
    =\ &\sup_\Pin \1 \left \{ \sup_{k \geq m} \left \lvert \sum_{i=m}^k \EE_P X_i^\leqc \right \rvert \geq \eps / 4 \right \} + \sup_\Pin \1 \left \{ \sup_{n \geq m} \left \lvert \sum_{i=m}^n \EE_P X_i^\leqc \right \rvert \geq \eps / 4 \right \}  \\
    \leq\ &\sup_\Pin \1 \left \{ \sup_{k \geq m} \sum_{i=m}^k \left \lvert \EE_P X_i^\leqc \right \rvert \geq \eps / 4 \right \} + \sup_\Pin \1 \left \{ \sup_{n \geq m}  \sum_{i=m}^n \left \lvert\EE_P X_i^\leqc \right \rvert \geq \eps / 4 \right \}  \\
    \fi 
    \leq\ &\sup_\Pin \1 \left \{ \sum_{i=m}^\infty \left \lvert \EE_P X_i^\leqc \right \rvert \geq \eps / 4 \right \} + \sup_\Pin \1 \left \{  \sum_{i=m}^\infty \left \lvert\EE_P X_i^\leqc \right \rvert \geq \eps / 4 \right \} 
  \end{align}
  which is zero for sufficiently large $m$ by the first of the three series, completing the proof.
\end{proof}

\subsection{A three-series theorem for stochastic nonincreasingness}\label{section:boundedness-in-probability-iid}
In addition to certain partial sums being $\Pcal$-uniformly Cauchy, an important condition that will appear throughout our proofs --- namely in the application of the uniform Kronecker lemma (Lemma~\ref{lemma:kronecker}) --- is that of $\Pcal$-uniform stochastic nonincreasingness (see \cref{definition:stochastic-boundedness}).
Here, we provide a three-series theorem providing sufficient conditions for partial sums to be $\Pcal$-uniformly stochastically nonincreasing and whose conditions are similar in spirit to those of \cref{theorem:kolmogorov-three-series}.

\begin{theorem}[A three-series theorem for $\Pcal$-uniform stochastic nonincreasingness]\label{theorem:boundedness three series theorem}
  Let $\infseqn{Z_n}$ be independent random variables on the probability spaces $(\Omega, \Fcal, \Pcal)$.
  Define the truncated random variables $\infseqn{Z_{n,B}^\leqo}$ given by
  \begin{equation}
    Z_{n,B}^\leqo := (Z_n / B) \cdot \1\{ |Z_n / B| \leq 1 \}.
  \end{equation}
  Suppose that the following three series uniformly vanish as $B \to \infty$:
  \begin{equation}
    \lim_{B \to \infty}\sup_{P \in \Pcal}\sum_{n=1}^\infty \left \lvert \EE_P  Z_{n,B}^\leqo \right \rvert = 0,\quad\lim_{B \to \infty}\sup_{P \in \Pcal}\sum_{n=1}^\infty \Var_P  Z_{n,B}^\leqo = 0,\quad\text{and}\quad \lim_{B \to \infty}\sup_{P \in \Pcal}\sum_{n=1}^\infty \Prob(|Z_n| > B) = 0.\nonumber
  \end{equation}
  Then, $S_n := \sum_{k=1}^n Z_k$ is $\Pcal$--uniformly stochastically nonincreasing, meaning that for any $\delta > 0$, there exists $B_\delta > 0$ so that for all $n \geq 1$,
  \begin{equation}\label{eq:boundedness three series desideratum}
    \sup_\Pin \Prob \left ( |S_n| \geq B_\delta \right ) < \delta.
  \end{equation}
\end{theorem}
\begin{proof}[Proof of \cref{theorem:boundedness three series theorem}]
  Let $\delta > 0$ be arbitrary. Our goal is to show that there exists $B_\delta$ large enough so that for all $n \geq 1$, \eqref{eq:boundedness three series desideratum} holds.
  Notice that for any $B > 0$, we have that
  \begin{align}
    &\sup_\Pin \Prob \left ( |S_n| > B \right ) \\
    =\ &\sup_\Pin \Prob \left ( \left \lvert \sum_{k=1}^n Z_k \right \rvert  > B \right )\\
    \leq\ & \underbrace{\sup_\Pin \Prob \left ( \left \lvert \sum_{k=1}^n (Z_k / B)\1 \{ |Z_k/ B| \leq 1 \} \right \rvert  > 1 \right )}_{(\star)} + \underbrace{\sup_\Pin  \sum_{k=1}^n \Prob \left ( |Z_k / B| > 1 \right )}_{(\dagger)}.
  \end{align}
  Letting $S_{n,B}^\leqo := \sum_{k=1}^n Z_{k,B}^\leqo$ be the partial sums of the truncated random variables $Z_{n,B}^\leqo := (Z_n/B) \cdot \1\{ |Z_n/B| \leq 1 \}$, notice that we can write $(\star)$ as
  \begin{align}
    &\sup_\Pin \Prob \left ( \left \lvert \sum_{k=1}^n (Z_k / B)\cdot \1\{|Z_k/B| \leq 1\} \right \rvert  > 1 \right ) \\
    =\ &\sup_\Pin \Prob \left ( \left \lvert S_{n,B}^\leq - \EE_P S_{n,B}^\leq + \EE_P S_{n,B}^\leq \right \rvert  > 1 \right )\\
    \leq\ &\sup_\Pin \Prob \left ( \left \lvert S_{n,B}^\leq - \EE_P S_{n,B}^\leq\right \rvert  > 1/ 2 \right ) + \sup_\Pin \Prob \left ( \left \lvert \EE_P S_{n,B}^\leq\right \rvert  > 1/ 2 \right )\\
    =\ &\underbrace{\sup_\Pin \Prob \left ( \left \lvert S_{n,B}^\leq - \EE_P S_{n,B}^\leq\right \rvert  > 1/ 2 \right )}_{(\star i)} + \underbrace{\sup_\Pin \1 \left \{ \left \lvert \EE_P S_{n,B}^\leq\right \rvert  > 1/ 2 \right \}}_{(\star ii)}.
  \end{align}
  By Kolmogorov's inequality, we have that
  \begin{align}
    (\star i) 
    &\leq 4\sup_\Pin \sum_{k=1}^\infty \Var_P \left ( (Z_k / B) \cdot \{ |Z_k / B| \leq 1 \} \right ).
  \end{align}
  Furthermore, by the triangle inequality and upper bounding the finite sum by an infinite one, we have
  \begin{equation}
    (\star ii) \leq \sup_\Pin \1 \left \{ \sum_{n=1}^\infty \left \lvert \EE_P \left [ (Z_n / B) \1 \{ |Z_n / B| \leq 1 \} \right ] \right \rvert > 1/2 \right \}.
  \end{equation}
  Once again upper bounding a finite sum by an infinite one, we have
  \begin{equation}
    (\dagger) \leq \sum_{k=1}^\infty \Prob \left ( |Z_k/B| > 1 \right ).
  \end{equation}
  Therefore, using the first, second, and third series conditions, we can find $B_\delta > 0$ so that for all $n \geq 1$, we have $(\star i) \leq \delta/2$, $(\star ii) = 0$, and $(\dagger) \leq \delta/2$, respectively, and thus
 \begin{align}
   \sup_\Pin \Prob \left ( |S_n| > B_\delta \right ) \leq \delta,
 \end{align} 
 completing the proof.
\end{proof}

\subsection{A distribution-uniform stochastic generalization of Kronecker's lemma}\label{section:kronecker}
In the classical $P$-pointwise setting, proofs of strong laws of large numbers rely on a (non-stochastic) convergence result known as \emph{Kronecker's lemma} which states that if $\infseqn{x_n}$ is a sequence of real numbers so that $\sum_{i=1}^\infty x_i = \ell \in \RR$,
then for any positive sequence $b_n \nearrow \infty$,
\begin{equation}\label{eq:kronecker-introduction-conclusion}
  \lim_{n \to \infty} \frac{1}{b_n}\sum_{i=1}^n b_i x_i = 0.
\end{equation}
This lemma is typically used as follows (consider the Marcinkiewicz-Zygmund SLLN with $1 < q < 2$ for the sake of example). One first shows via the $P$-pointwise Kolmogorov three-series theorem that the sum
\begin{equation}\label{eq:kronecker-introduction-sum}
  \sum_{k=1}^n \frac{X_k - \EE_P(X)}{k^{1/q}}
\end{equation}
is $P$-almost surely convergent as $n \to \infty$, at which point one applies Kronecker's lemma (on the same set of $P$-probability 1) to justify that
\begin{equation}\label{eq:kronecker-introduction-a.s.-convergence}
  \Prob \left (\lim_{n \to \infty}\frac{1}{n^{1/q}} \sum_{i=1}^n (X_i - \EE_P(X)) = 0 \right ) = 1.
\end{equation}
However, it is not clear how Kronecker's lemma can be used to derive a $\Pcal$-uniform analogue of \eqref{eq:kronecker-introduction-a.s.-convergence} if \eqref{eq:kronecker-introduction-sum} is only shown to be $\Pcal$-uniformly Cauchy and especially if the limiting value $\ell \equiv \ell(P)$ of \eqref{eq:kronecker-introduction-sum} is a potentially random quantity whose behavior depends on the distribution $P \in \Pcal$ itself. Indeed, for the $\Pcal$-uniform case, we introduce an additional uniform stochastic nonincreasingness condition given in \cref{definition:stochastic-boundedness}. Satisfying \cref{definition:stochastic-boundedness} in pursuit of proving Theorems~\ref{theorem:mz-slln}(i), \ref{theorem:mz-slln}(ii), and \ref{theorem:slln-noniid} requires additional care (the details of which can be found in \cref{section:boundedness-in-probability-iid}), while this subtlety is easily sidestepped in the $P$-pointwise setting. Nevertheless, the following lemma serves as a stochastic and $\Pcal$-uniform generalization of Kronecker's lemma that lends itself naturally to our goals and reduces to the usual $P$-almost sure application of Kronecker's lemma when $\Pcal = \{P\}$ is a singleton.

\begin{lemma}[A $\Pcal$-uniform stochastic generalization of Kronecker's lemma]\label{lemma:kronecker}
  Let $\infseqn{Z_n}$ be a sequence of random variables so that their partial sums $S_n := \sum_{i=1}^n Z_i$ form a $\Pcal$-uniform Cauchy sequence in the sense of \cref{definition:distribution-uniform-cauchy-sequence} and which is $\Pcal$-uniformly stochastically nonincreasing in the sense of \cref{definition:stochastic-boundedness}.
  Let $b_n \nearrow \infty$ be a positive, nondecreasing, and diverging sequence.
  Then, $b_n^{-1}\sum_{i=1}^n b_i Z_i$ vanishes $\Pcal$-uniformly in the sense of \cref{definition:dt-uniform}.
\end{lemma}

\begin{proof}
  Fix any $\eps > 0$ and any $\delta > 0$. Our goal is to show that for all $m$ sufficiently large,
  \begin{equation}
    \sup_{P \in \Pcal} \Prob \left ( \sup_{k \geq m} \left \lvert \frac{1}{b_k} \sum_{i=1}^k b_i Z_i  \right \rvert \geq \eps \right ) < 4\delta,
  \end{equation}
  where the factor of 4 is only included for mathematical convenience later on.
  Using the assumption that $S_n$ is $\Pcal$-uniformly Cauchy and stochastically nonincreasing, let $B > 0$ and choose $N$ sufficiently large so that for any $m \geq N$,
  \begin{equation}\label{eq:kronecker-proof-condition-cauchy}
    \sup_{P \in \Pcal} \Prob \left ( \sup_{k,n \geq m} \left \lvert S_n - S_k \right \rvert \geq \eps/6 \right ) < \delta,
  \end{equation}
  and so that
  \begin{equation}\label{eq:kronecker-proof-condition-stochastic-boundedness}
    \sup_\Pin \Prob \left ( |S_m| \geq B \right ) < \delta.
  \end{equation}
  Again using stochastic nonincreasingness and the assumption that $b_n \nearrow \infty$, let $N^\star \equiv N^\star(\eps, B, N) \geq N$ be sufficiently large so that
  \begin{equation}\label{eq:kronecker-proof-condition-increasing-b_m}
    \frac{\eps b_{N^\star}}{6b_N}\geq B,
  \end{equation}
  and so that
  \begin{equation}\label{eq:kronecker-proof-condition-stochastic-boundedness-2}
      \sup_{P \in\Pcal}\Prob \left ( \sum_{i=1}^{N-1} \left \lvert S_i \right \rvert \geq \frac{\eps b_{N^\star}}{6b_{N}} \right ) < \delta,
  \end{equation}
  where we can impose the latter condition since $\sum_{i=1}^{N-1} |S_i|$ is $\Pcal$-uniformly bounded in probability for any fixed $N$ and we can take $\eps b_{N^\star} / 6b_N$ to be arbitrarily large for any fixed $N$ and $\eps$.
  Then for all $m \geq N^\star$,
  \begin{align}
    & \sup_{P \in \Pcal}\Prob \left ( \sup_{k \geq m} \left \lvert  \frac{1}{b_k}\sum_{i=1}^k b_i Z_i  \right \rvert \geq \eps  \right )\\
    =\ &  \sup_{P \in \Pcal}\Prob \left ( \sup_{k \geq m} \left \lvert  S_k - \frac{1}{b_k}\sum_{i=1}^{k-1} (b_{i+1} - b_i) S_i \right \rvert \geq \eps \right ) \label{eq:kronecker-proof-summation-by-parts}\\
    \ifverbose 
    =\ &  \sup_{P \in \Pcal}\Prob \left ( \sup_{k \geq m} \left \lvert  S_k - \frac{1}{b_k}\sum_{i=1}^{N-1} (b_{i+1} - b_i) S_i - \frac{1}{b_k} \sum_{i=N}^{k-1} (b_{i+1} - b_i)S_i \right \rvert \geq \eps \right ) \label{eq:kronecker-proof-breaking-sum-up} \\
    =\ &  \sup_{P \in \Pcal}\Prob \left ( \sup_{k \geq m} \left \lvert  S_k - \frac{1}{b_k}\sum_{i=1}^{N-1} (b_{i+1} - b_i) S_i - \frac{1}{b_k} \sum_{i=N}^{k-1} (b_{i+1} - b_i)S_m -\frac{1}{b_k} \sum_{i=N}^{k-1} (b_{i+1} - b_i)(S_i - S_m) \right \rvert \geq \eps \right ) \\
    =\ &  \sup_{P \in \Pcal}\Prob \left ( \sup_{k \geq m} \left \lvert  S_k - \frac{1}{b_k}\sum_{i=1}^{N-1} (b_{i+1} - b_i) S_i - \frac{b_k-b_N}{b_k} \cdot S_m  -\frac{1}{b_k} \sum_{i=N}^{k-1} (b_{i+1} - b_i)(S_i - S_m) \right \rvert \geq \eps \right ) \label{eq:kronecker-proof-consolidating-after-adding-and-subtracting-S_m} \\
    \fi 
    \leq\ &  \sup_{P \in \Pcal}\Prob \left ( \sup_{k \geq m} \left \lvert  S_k -\frac{b_k-b_N}{b_k} S_m \right \rvert \geq \eps/3 \right ) + \label{eq:kronecker-proof-first-term}\\
    &\sup_{P \in \Pcal}\Prob \left ( \sup_{k \geq m} \left \lvert \frac{1}{b_k}\sum_{i=1}^{N-1} (b_{i+1} - b_i) S_i \right \rvert \geq \eps/3 \right ) + \label{eq:kronecker-proof-second-term}\\
    &\sup_{P \in \Pcal}\Prob \left ( \sup_{k \geq m} \left \lvert \frac{1}{b_k} \sum_{i=N}^{k-1} (b_{i+1} - b_i)(S_i - S_m) \right \rvert \geq \eps/3 \right ),\label{eq:kronecker-proof-third-term}
  \end{align}
  where \eqref{eq:kronecker-proof-summation-by-parts} follows from summation by parts
  \ifverbose 
    , \eqref{eq:kronecker-proof-consolidating-after-adding-and-subtracting-S_m} follows from breaking the sum up into $i = 1,\dots, N-1$ and $i=N ,\dots, k-1$ and simplifying the telescoping sum,
  \fi 
   and \eqref{eq:kronecker-proof-first-term} follows from the triangle inequality.
  We will now bound the terms in \eqref{eq:kronecker-proof-first-term}, \eqref{eq:kronecker-proof-second-term}, and \eqref{eq:kronecker-proof-third-term} separately.
  \paragraph{Bounding \eqref{eq:kronecker-proof-first-term} by $2\delta$.} For any $m \geq N^\star$, we have
  \begin{align}
    &\sup_\Pin \Prob \left ( \sup_{k \geq m} \left \lvert  S_k -\frac{b_k-b_N}{b_k} S_m \right \rvert \geq  \eps/3 \right )\\
    \ifverbose 
   \leq\ &\sup_\Pin \Prob \left ( \sup_{k \geq m} \left \lvert  S_k - S_m \right \rvert \geq \eps / 6 \right ) + \sup_\Pin \Prob \left ( \sup_{k \geq m} \left \lvert  \frac{b_N}{b_k} S_m \right \rvert \geq \eps/6 \right )\\
    \fi 
    \leq\ &\underbrace{\sup_\Pin \Prob \left ( \sup_{k \geq m} \left \lvert  S_k - S_m \right \rvert \geq \eps / 6 \right )}_{< \delta} + \underbrace{\sup_\Pin \Prob \left (  |S_m| \geq \frac{\eps b_m}{6b_N} \right )}_{< \delta} < 2\delta.
  \end{align}
  where the last inequality follows from the conditions imposed on $N^\star \geq N$ in \eqref{eq:kronecker-proof-condition-cauchy} and \eqref{eq:kronecker-proof-condition-stochastic-boundedness} combined with the fact that $\eps b_m / 6 b_N \geq B$ for all $m \geq N^\star$ as in \eqref{eq:kronecker-proof-condition-increasing-b_m}.
  \paragraph{Bounding \eqref{eq:kronecker-proof-second-term} by $\delta$.} For any $m \geq N^\star$, we have
  \begin{align}
    &\sup_\Pin\Prob \left ( \sup_{k \geq m} \left \lvert \frac{1}{b_k}\sum_{i=1}^{N-1} (b_{i+1} - b_i) S_i \right \rvert \geq \eps / 3 \right )\\
    \ifverbose 
    =\ &\sup_\Pin\Prob \left ( \left \lvert\sum_{i=1}^{N-1} (b_{i+1} - b_i) S_i \right \rvert \geq \eps b_m/3 \right )\\
    \fi 
    \leq\ &\sup_\Pin\Prob \left ( \sum_{i=1}^{N-1} \left \lvert b_{i+1} - b_i\right \rvert \cdot |S_i|  \geq \eps b_m/3 \right )\\
    \leq\ &\sup_\Pin\Prob \left ( \sum_{i=1}^{N-1} \left \lvert S_i \right \rvert \geq \frac{\eps b_m}{3b_{N}} \right ) < \delta,
  \end{align}
  which follows from the condition imposed on $N^\star$ in \eqref{eq:kronecker-proof-condition-stochastic-boundedness-2}.
  \paragraph{Bounding \eqref{eq:kronecker-proof-third-term} by $\delta$.} For any $m \geq N$, we have
  \begin{align}
    &\sup_{\Pin}\Prob \left ( \sup_{k \geq m} \left \lvert \frac{1}{b_k} \sum_{i=N}^{k-1} (b_{i+1} - b_i)(S_i - S_m) \right \rvert \geq \eps / 3 \right )\\
    \ifverbose 
    \leq\ &\sup_{\Pin}\Prob \left ( \sup_{k \geq m} \frac{1}{b_k} \sum_{i=N}^{k-1} (b_{i+1} - b_i)|S_i - S_m| \geq \eps/3 \right ) \\
    \fi 
    \leq\ &\sup_{\Pin}\Prob \left ( \sup_{k \geq m} \frac{1}{b_k} \sum_{i=N}^{k-1} (b_{i+1} - b_i)\eps / 6 \geq \eps/3 \right ) + \underbrace{\sup_\Pin \Prob \left ( \sup_{k\geq N} |S_k - S_m| \geq \eps/6 \right )}_{ < \delta} \label{eq:kronecker-proof-third-term-using-cauchy}\\
    <\ &\underbrace{\sup_\Pin \1 \left \{ \sup_{k \geq m} \frac{b_k-b_N}{b_k}  \geq 2 \right \}}_{= 0} +\ \delta
  \end{align}
  which follows from the conditions imposed on $N$ in \eqref{eq:kronecker-proof-condition-cauchy} and the fact that $\supkm (b_k - b_N)/b_k \leq 1$.
  Putting the bounds in \eqref{eq:kronecker-proof-first-term}, \eqref{eq:kronecker-proof-second-term}, and \eqref{eq:kronecker-proof-third-term} together, we have that for any $m \geq N^\star$,
  \begin{equation}
    \sup_{P \in \Pcal} \Prob \left ( \sup_{k \geq m} \left \lvert  \frac{1}{b_k}\sum_{i=1}^k b_i Z_i  \right \rvert \geq \eps  \right ) < 4\delta,
  \end{equation}
  which yields the desired result, completing the proof.
\end{proof}

\subsection{Distribution-uniform Borel-Cantelli lemmas}\label{section:borel-cantelli}
In order to show that $\Pcal$-UI of certain finite absolute moments is in fact \emph{necessary} for the $\Pcal$-uniform SLLN to hold --- i.e.~the result of \cref{theorem:mz-slln}(iii) --- we rely on a $\Pcal$-uniform generalization of the \emph{second} Borel-Cantelli lemma. Before discussing the second Borel-Cantelli lemma, let us briefly discuss the first. A natural desideratum for a $\Pcal$-uniform first Borel-Cantelli lemma would be to say that for events $\infseqn{E_n}$ in $\Fcal$, if $\lim_{m}\sup_\Pin\sum_{k = m}^\infty \Prob(E_n) = 0$, then $\lim_{m} \sup_\Pin \Prob \left ( \bigcup_{k = m}^\infty E_k \right ) = 0$. Indeed, this is trivially satisfied since
  \begin{equation}
    \lim_\mto \sup_\Pin \Prob \left ( \bigcup_{k = m}^\infty E_k \right ) \leq \lim_\mto \sup_\Pin \sum_{k=m}^\infty \Prob \left ( E_k \right ).
  \end{equation}
  For this reason, we do not dwell on the first Borel-Cantelli lemma, but instead shift our attention to the second since its $\Pcal$-uniform generalization (and the proof thereof) is nontrivial in comparison and is central to the proof of \cref{theorem:mz-slln}(iii).

\begin{lemma}[The second $\Pcal$-uniform Borel-Cantelli lemma]\label{lemma:second-borel-cantelli}
  Let $\infseqn{E_n}$ be independent events such that
  \begin{equation}\label{eq:second-borel-cantelli-assumption}
    0 < \lim_{m \to \infty} \sup_\Pin \sum_{k=m}^\infty \Prob(E_k) \leq \infty.
  \end{equation}
  Then the probability of infinitely many of them occurring does not $\Pcal$-uniformly vanish, i.e.
 \begin{equation}\label{eq:second-borel-cantelli-conclusion}
   \lim_{m \to \infty} \sup_\Pin \Prob \left ( \bigcup_{k = m}^\infty E_k \right ) > 0.
 \end{equation} 
  
\end{lemma}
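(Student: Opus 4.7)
My plan is to exploit the fact that the supremum over $\Pcal$ need not be attained by a single distribution. Since $m \mapsto \sup_\Pin \sum_{k=m}^\infty \PP_P(E_k)$ is nonincreasing in $m$ (each tail sum shrinks as $m$ grows), the assumption $\lim_m \sup_\Pin \sum_{k=m}^\infty \PP_P(E_k) > 0$ upgrades to the uniform-in-$m$ statement that there is some $\alpha > 0$ (possibly $+\infty$) with $\sup_\Pin \sum_{k=m}^\infty \PP_P(E_k) \geq \alpha$ for every $m$. The plan is then, for each $m$, to extract a witness $P_m \in \Pcal$ (depending on $m$) with $\sum_{k=m}^\infty \PP_{P_m}(E_k) \geq \beta$, where $\beta := \min(\alpha/2, 1) > 0$. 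This is simply the definition of supremum, with the case $\alpha = \infty$ handled by choosing any $P_m$ making the tail at least $1$.

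Next, I would apply the classical finite-$N$ independence identity under $P_m$:
\begin{equation*}
\PP_{P_m}\!\left(\bigcap_{k=m}^{N} E_k^c\right) = \prod_{k=m}^{N} \bigl(1 - \PP_{P_m}(E_k)\bigr) \leq \exp\!\left(-\sum_{k=m}^{N} \PP_{P_m}(E_k)\right),
\end{equation*}
using the elementary inequality $1-x \leq e^{-x}$. Sending $N \to \infty$ by continuity of $\PP_{P_m}$ from above yields $\PP_{P_m}\bigl(\bigcap_{k=m}^\infty E_k^c\bigr) \leq e^{-\beta}$, hence
\begin{equation*}
\PP_{P_m}\!\left(\bigcup_{k=m}^\infty E_k\right) \geq 1 - e^{-\beta}.
\end{equation*}
Taking the supremum over $P \in \Pcal$ (which is at least as large as the value at $P_m$) gives $\sup_\Pin \PP_P\bigl(\bigcup_{k=m}^\infty E_k\bigr) \geq 1 - e^{-\beta}$ uniformly in $m$, so the limit in $m$ is at least $1 - e^{-\beta} > 0$, which is the desired conclusion.

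The conceptual subtlety — and the only point that differs from the classical proof — is that one must resist the temptation to fix a single $P^\star$ witnessing $\sum_{k=1}^\infty \PP_{P^\star}(E_k) = \infty$; the hypothesis does not grant such a $P^\star$. The observation that saves the argument is that we are only asked to lower bound the supremum $\sup_\Pin \PP_P(\bigcup_{k \geq m} E_k)$ for each $m$, so an $m$-dependent witness $P_m$ suffices. After that, no truly new machinery is needed — independence together with the exponential bound on the product runs through exactly as in the classical second Borel-Cantelli lemma.
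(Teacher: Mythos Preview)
Your proof is correct and follows essentially the same approach as the paper: both arguments combine independence with the bound $1-x \le e^{-x}$ to control $\PP_P(\bigcap_{k\ge m} E_k^c)$ by $\exp(-\sum_{k\ge m}\PP_P(E_k))$, then invoke the hypothesis. The only cosmetic difference is that you extract an explicit $m$-dependent witness $P_m\in\Pcal$, whereas the paper carries $\sup_\Pin$/$\inf_\Pin$ through the chain of inequalities directly and pulls it inside the exponential via monotonicity, arriving at the slightly sharper lower bound $1-\exp\bigl(-\lim_{m}\sup_\Pin\sum_{k\ge m}\PP_P(E_k)\bigr)$ rather than your $1-e^{-\beta}$.
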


\begin{proof} The proof proceeds by a direct calculation. Writing out the limit in \eqref{eq:second-borel-cantelli-conclusion}, we have
  \begin{align}
   \lim_{m \to \infty} \sup_\Pin \Prob \left ( \bigcup_{k = m}^\infty E_k \right )
   =\ &\lim_{m \to \infty} \sup_\Pin \left \{  1-\Prob \left (  \bigcap_{k = m}^\infty E_k^c \right ) \right \}\\
    \ifverbose 
   =\ & 1- \lim_{m \to \infty} \inf_\Pin  \Prob \left ( \bigcap_{k = m}^\infty E_k^c \right )\\
    \fi 
    =\ &1-\lim_{m \to \infty} \inf_\Pin \lim_{t\to \infty}\Prob \left ( \bigcap_{k = m}^t E_k^c \right ) \label{eq:second-borel-cantelli-proof-nested}\\
    \ifverbose 
    =\ &1-\lim_{m \to \infty} \inf_\Pin \lim_{t\to \infty}\prod_{k=m}^t\Prob \left ( E_k^c \right )\\
    \fi 
    =\ &1-\lim_{m \to \infty} \inf_\Pin \lim_{t\to \infty}\prod_{k=m}^t(1-\Prob(E_k)) \label{eq:second-borel-cantelli-proof-independence}\\
    \ifverbose 
    \geq\ &1-\lim_{m \to \infty} \inf_\Pin \lim_{t\to \infty} \exp \left \{ -\sum_{k=m}^t\Prob(E_k) \right \}\\
    \fi 
    =\ &1-\exp \left \{ -\lim_{m \to \infty} \sup_\Pin \sum_{k=m}^\infty\Prob(E_k) \right \} > 0,\label{eq:second-borel-cantelli-proof-nonzero-lim}
  \end{align}
  where \eqref{eq:second-borel-cantelli-proof-nested} follows from the fact that the intersections $\infseqt{\bigcap_k^t E_k}$ are nested, \eqref{eq:second-borel-cantelli-proof-independence} exploits independence of $\infseqn{E_n}$, and \eqref{eq:second-borel-cantelli-proof-nonzero-lim} follows from the assumption in \eqref{eq:second-borel-cantelli-assumption}. This completes the proof.
\end{proof}

\section{Proof details for Theorems~\ref*{theorem:mz-slln} and~\ref*{theorem:slln-noniid}}\label{section:complete proofs}

With the introduction of the distribution-uniform analogues of Kolmogorov's three-series theorem (\cref{theorem:kolmogorov-three-series}), Kronecker's lemma (\cref{lemma:kronecker}), and the second Borel-Cantelli lemma (\cref{lemma:second-borel-cantelli}), we are ready to complete the proof details for our main results in Theorems~\ref{theorem:mz-slln} and~\ref{theorem:slln-noniid}.

\subsection{Proof details for \cref*{theorem:mz-slln}}\label{proof:mz-slln}

\begin{proof}[Proof details for \cref{theorem:mz-slln}(i)]
  Given the proof outline following \cref{theorem:mz-slln}, it only remains to show that the $\Pcal$-UI condition
\begin{equation}\label{eq:mz-slln-proof-uniform-integrability}
  \lim_\mto \sup_\Pin \EE_P \left ( |X - \EE_P(X)|^q \cdot \1{\{|X - \EE_P(X)|^q > m\}} \right ) = 0
\end{equation}
is sufficient to satisfy the conditions of both the $\Pcal$-uniform Kolmogorov three series theorem (\cref{theorem:kolmogorov-three-series}) and those of the $\Pcal$-uniform nonincreasingness theorem (\cref{theorem:boundedness three series theorem}) for appropriately truncated and scaled versions of $\infseqn{X_n}$, ultimately showing that $S_n := \sum_{i=1}^n X_i$ is both $\Pcal$-uniformly Cauchy and stochastically nonincreasing. These conditions are shown in Lemmas~\ref{lemma:sufficient-conditions-puniform-kolmogorov-three-series-iid} and~\ref{lemma:sufficient-conditions-puniform-boundedness-noniid}, respectively.
\end{proof}

\begin{proof}[Proof details for \cref{theorem:mz-slln}(ii)]
  Similar to the proof of \cref{theorem:mz-slln}(i), it suffices to show that the \emph{uncentered} $\Pcal$-UI condition
  \begin{equation}\label{eq:mz-slln-proof-uniform-integrability-qlessthan1}
    \lim_\mto \sup_\Pin \EE_P \left ( |X|^q \1{\{ |X|^q > m \}} \right ) = 0
  \end{equation}
  is sufficient to satisfy the conditions of Theorems~\ref{theorem:kolmogorov-three-series} and~\ref{theorem:boundedness three series theorem} for appropriately truncated and scaled versions of $\infseqn{X_n}$, the details of which are provided in Lemmas~\ref{lemma:sufficient-conditions-m-z-qlessthan1} and~\ref{lemma:satisfying uniform boundedness three series for q smaller than 1}.
\end{proof}

\begin{proof}[Proof details for \cref{theorem:mz-slln}(iii)]
  Suppose that $\Pcal$ is a class of distributions for which the $\Pcal$-UI condition does not hold, i.e.
  \begin{equation}
    0 < \lim_\mto \sup_\Pin \EE_P \left ( |X - \mu(P;q)|^q \1{\{ |X-\mu(P;q)|^q > m \}} \right ) \leq \infty,
  \end{equation}
  recalling that $\mu(P;q) = \EE_P(X)$ when $1 \leq q < 2$ and $\mu(P ; q) = 0$ when $ 0 < q < 1$. Then our goal is to show that
  \begin{equation}\label{eq:not-convergence}
    \lim_{m \to \infty}\sup_{P \in \Pcal} \Prob \left ( \supkm \left \lvert \frac{1}{k^{1/q}} \sum_{i=1}^k (X_i - \mu(P;q)) \right \rvert \geq \frac{1}{2} \right ) > 0.
\end{equation}
Indeed, pre-multiplying the above probability by 2 for any $P\in \Pcal$ and any $m \geq 1$, consider the partial sums $S_n := \sum_{i=1}^n (X_i - \mu(P; q))$ and note that
\begin{align}
  2\Prob \left ( \sup_{k \geq m-1} \frac{1}{k^{1/q}} |S_{k}| \geq 1/2 \right ) &\geq\Prob \left ( \supkm \frac{1}{k^{1/q}} |S_{k}| \geq 1/2 \right ) + \Prob \left ( \sup_{k \geq m-1} \frac{1}{k^{1/q}} |S_{k}| \geq 1/2 \right ) \\
  \ifverbose 
                                                                               &\geq \Prob \left ( \supkm \frac{1}{k^{1/q}} |S_{k}| \geq 1/2 \right ) + \Prob \left ( \sup_{k \geq m-1} \frac{1}{(k+1)^{1/q}} |S_{k}| \geq 1/2 \right )\\
                                                                               &= \Prob \left ( \supkm \frac{1}{k^{1/q}} |S_{k}| \geq 1/2 \right ) + \Prob \left ( \supkm \frac{1}{k^{1/q}} |S_{k-1}| \geq 1/2 \right )\\
  \fi 
  &\geq\Prob \left ( \supkm \frac{1}{k^{1/q}} (|S_k| + |S_{k-1}|) \geq 1 \right ) \\
  \ifverbose 
  \fi 
  &\geq \Prob \left ( \supkm \frac{1}{k^{1/q}}\left \lvert  X_k - \mu(P;q) \right \rvert \geq 1 \right )
\end{align}
  and hence by the $\Pcal$-uniform second Borel-Cantelli lemma (\cref{lemma:second-borel-cantelli}), it suffices to show that
  \begin{equation}\label{eq:mz-slln iii proof sufficient condition}
    \lim_{m \to \infty}\sup_{P \in \Pcal} \sum_{k=m}^\infty \Prob \left ( |X - \mu(P;q)| > k^{1/q} \right ) > 0,
  \end{equation}
  from which we will obtain \eqref{eq:not-convergence}. Indeed, by \cite[Theorem 2.1]{hu2017note} --- or as shown directly in \cref{lemma:uniformly-integrable-iff-uniformly-tail-vanishing} --- we have that for any random variable $Y$,
  \begin{align}
    \lim_\mto \sup_\Pin \sum_{k=m}^\infty \Prob \left ( |Y| > k \right ) = 0 \quad \text{if and only if} \quad\lim_\mto \sup_\Pin \EE_P \left ( |Y|\1{\left \{ |Y| > m \right \}}\right )  = 0,
  \end{align}
  and hence if the $q^\tth$ moment is not $\Pcal$-UI, we must have that \eqref{eq:mz-slln iii proof sufficient condition} holds. This completes the proof.
\end{proof}

\subsubsection{Sufficient conditions for the \texorpdfstring{$\Pcal$}{distribution}-uniform Kolmogorov three-series theorem}\label{section:sufficient-conditions-puniform-kolmogorov-three-series-iid}

In what follows, we verify that the tails of the three series of \cref{theorem:kolmogorov-three-series} vanish $\Pcal$-uniformly when $X$ has a $\Pcal$-UI $q^\tth$ moment, thereby enabling the application of the $\Pcal$-uniform Kolmogorov three-series theorem. Lemmas~\ref{lemma:sufficient-conditions-puniform-kolmogorov-three-series-iid} and~\ref{lemma:sufficient-conditions-m-z-qlessthan1} consider the cases of $1 < q < 2$ and $0 < q < 1$, respectively.

\begin{lemma}[Sufficient conditions in the identically distributed case when $1 < q < 2$]\label{lemma:sufficient-conditions-puniform-kolmogorov-three-series-iid}
  Let $\infseqn{X_n}$ be \iid{} random variables on the probability spaces $(\Omega, \Fcal, \Pcal)$ and let $Y_n := X_n - \EE_P X$ be their centered versions for each $P \in \Pcal$. Suppose that the $q^\text{th}$ moment is UI for some $1 < q < 2$.
  Then, the three conditions of the $\Pcal$-uniform Kolmogorov three-series theorem are satisfied for $Z_n := Y_n/n^{1/q}$
  with $c = 1$.
\end{lemma}

\begin{proof}
  Throughout the proofs for the three series, consider the truncated random variable $Z_n^\leqo$ given by
  \begin{equation}
    Z_n^\leqo := Z_n \1\{Z_n \leq 1\}.
    \end{equation}
  Let us now separately show that the tails of the three series vanish $\Pcal$-uniformly.
  \paragraph{The first series.} Writing out the first series $\sup_\Pin \sum_{n=m}^\infty |\EE_P Z_n^\leqo|$ for any $m \geq 1$, we have
  \begin{align}
    \sup_\Pin \sum_{n=m}^\infty  \left \lvert \EE_P Z_n^\leqo \right \rvert &= \sup_\Pin \sum_{n=m}^\infty \left \lvert \EE_P \left ( \frac{Y\1(|Y| \leq n^{1/q})}{n^{1/q}} \right ) \right \rvert \\
    \ifverbose 
    &= \sup_\Pin \sum_{n=m}^\infty \left \lvert \EE_P \left ( \frac{-Y\1(|Y| > n^{1/q})}{n^{1/q}} \right ) \right \rvert \\
    \fi 
    &\leq \sup_\Pin \sum_{n=m}^\infty \EE_P \left ( \frac{|Y|\1(|Y| > n^{1/q})}{n^{1/q}} \right )\\
                                                  &= \sup_\Pin \sum_{n=m}^\infty \sum_{k=n}^\infty \EE_P \left ( \frac{|Y| \1(k^{1/q} < |Y| \leq (k+1)^{1/q})}{n^{1/q}} \right )\\
    \ifverbose 
                                                  &= \sup_\Pin \sum_{n=1}^\infty \sum_{k=n}^\infty \1(k \geq n \geq m) \cdot \EE_P \left ( \frac{|Y| \1(k^{1/q} < |Y| \leq (k+1)^{1/q})}{n^{1/q}} \right )\\
                                                  &= \sup_\Pin \sum_{k=m}^\infty \sum_{n=m}^k \EE_P \left ( \frac{|Y| \1(k^{1/q} < |Y| \leq (k+1)^{1/q})}{n^{1/q}} \right )\\
    \fi 
                                                 &\leq \sup_\Pin \sum_{k=m}^\infty \EE_P \left ( |Y| \1(k^{1/q} < |Y| \leq (k+1)^{1/q}) \right ) \cdot \sum_{n=1}^k \frac{1}{n^{1/q}}.
  \end{align}
  Now, there exists some constant $C_q > 0$ depending only on $q$ so that $\sum_{n=1}^k 1/n^{1/q} \leq C_q k / (k+1)^{1/q}$, thus
  \begin{align}
    \sup_\Pin \sum_{n=m}^\infty \left \lvert \EE_P Z_n^\leqo \right \rvert
    &\leq \sup_\Pin \sum_{k=m}^\infty \EE_P \left ( |Y| \1(k^{1/q} < |Y| \leq (k+1)^{1/q}) \right ) \cdot C_q \frac{k}{(k+1)^{1/q}} \\
    \ifverbose 
                                                  &\leq \sup_\Pin \sum_{k=m}^\infty \cancel{(k+1)^{1/q}} \Prob \left ( k^{1/q} < |Y| \leq (k+1)^{1/q} \right ) \cdot C_q \frac{k}{\cancel{(k+1)^{1/q}}} \\
    \fi 
    &\leq C_q \sup_\Pin \sum_{k=m}^\infty k \Prob \left ( k^{1/q} < |Y| \leq (k+1)^{1/q} \right ) \label{eq:part of first series used in second series proof}\\
    &= C_q \sup_\Pin \sum_{n=1}^\infty \Prob \left ( |Y|^q > (m \lor n) \right )\\
    \ifverbose 
    &= C_q \sup_\Pin \sum_{n=1}^\infty \Prob \left ( |Y|^q\1\{ |Y|^q > m  \} > n \right )\\
    \fi 
    &\leq C_q \sup_\Pin \EE_P \left ( |Y|^q \1\{|Y|^q > m\} \right ),
  \end{align}
  where the last inequality follows from the fact that $|Y|^q > (m\lor n)$ if and only if $|Y|^q \1\{ |Y|^q > m\lor n\} > (m \lor n)$ and using the expectation-tail-probability identity.
  \ifverbose 
    The above follows from the fact that
  \begin{align}
    &C_q \sup_\Pin \sum_{k=m}^\infty k \Prob \left ( k^{1/q} < |Y| \leq (k+1)^{1/q} \right ) \\
    =\ &C_q \sup_\Pin \sum_{k=1}^\infty \sum_{n=1}^k \1(k \geq m \lor n) \cdot \Prob \left ( k^{1/q} < |Y| \leq (k+1)^{1/q} \right ) \\
                                                  =\ &C_q \sup_\Pin \sum_{n=1}^\infty \sum_{k=m \lor n}^\infty \Prob \left ( k^{1/q} < |Y| \leq (k+1)^{1/q} \right ) \\
                                                                                                                                          =\ &C_q \sup_\Pin \sum_{n=1}^\infty \Prob \left ( |Y|^q > (m \lor n) \right )
  \end{align}
  and noticing that $|Y|^q > (m\lor n)$ if and only if $|Y|^q \1\{|Y|^q > m\lor n\} > (m \lor n)$ so that
  \begin{align}
                                                 C_q \sup_\Pin \sum_{n=1}^\infty \Prob \left ( |Y|^q > (m \lor n) \right ) &= C_q \sup_\Pin \sum_{n=1}^\infty \Prob \left ( |Y|^q\1\{ |Y|^q > m \lor n \} > (m \lor n) \right ) \\
                                                  &\leq C_q \sup_\Pin \sum_{n=1}^\infty \Prob \left ( |Y|^q\1\{ |Y|^q > m  \} > n \right ) \\
                                                  &\leq C_q \sup_\Pin \EE_P \left ( |Y|^q \1\{|Y|^q > m\} \right ),
  \end{align}
  and hence by the $\Pcal$-UI of the $q^\tth$ moment as in \eqref{eq:uniform-integrability-qth-moment}, we have that
  \begin{equation}
    \lim_\mto \sup_\Pin \sum_{n=m}^\infty \left \lvert \EE_P Z_n^\leqo \right \rvert = 0.
  \end{equation}
  \fi 
  Taking limits as $m \to \infty$ completes the argument for the first series.
  \paragraph{The second series.} Writing out the second series $\sup_\Pin \sum_{n=m}^\infty \Var_P (Z_n^\leqo)$ for any $m \geq 1$ and performing a direct calculation, we have
  \begin{align}
    \sup_\Pin \sum_{n=m}^\infty \Var_P Z_n^\leqo &\leq \sup_\Pin \sum_{n=m}^\infty \EE_P [ (Z_n^\leqo)^2] \\
    \ifverbose 
                                             &= \sup_\Pin \sum_{n=m}^\infty \EE_P \left [ \frac{Y^2}{n^{2/q}} \1(|Y| \leq n^{1/q}) \right ] \\
                                             &= \sup_\Pin \sum_{n=1}^\infty \1 \{n \geq m \} \cdot \EE_P \left [ \frac{Y^2}{n^{2/q}} \1\{|Y| \leq n^{1/q}\} \right ] \\
    \fi 
                                             &\leq \sup_\Pin \sum_{n=1}^\infty \1 \{n \geq m \} \cdot \EE_P \left [ \frac{Y^2}{n^{2/q}} \1\{|Y| \leq n^{1/q}\} \right ] \\
    \ifverbose 
                                             &= \sup_\Pin \sum_{n=1}^\infty \sum_{k=1}^n \1 \{n \geq m \} \cdot \EE_P \left [ \frac{Y^2}{n^{2/q}} \1\{(k-1)^{1/q} < |Y| \leq k^{1/q}\} \right ] \\
                                             &= \sup_\Pin \sum_{k=1}^\infty \sum_{n=k}^\infty \1 \{n \geq k \lor m \} \cdot \EE_P \left [ \frac{Y^2}{n^{2/q}} \1\{(k-1)^{1/q} < |Y| \leq k^{1/q}\} \right ] \\
    \fi 
                                                 &= \sup_\Pin \sum_{k=1}^\infty \EE_P \left [ Y^2 \1\{(k-1)^{1/q} < |Y| \leq k^{1/q}\} \right ] \sum_{n=k \lor m}^\infty \frac{1}{n^{2/q}}.
  \end{align}
  Now, there exists a constant $C_q > 0$ depending only on $q$ so that $\sum_{n=k\lor m}^\infty 1/n^{2/q} \leq (k\lor m) / (k \lor m)^{2/q}$, and hence we have
  \begin{align}
    \sup_\Pin \sum_{n=m}^\infty \Var_P Z_n^\leqo
                                             &\leq C_q \Bigg \{ \underbrace{\sup_\Pin \sum_{k=1}^{m} \frac{m}{m^{2/q}} \cdot \EE_P \left [ Y^2 \1\{(k-1)^{1/q} < |Y| \leq k^{1/q}\} \right ]}_{(\star_\leq)} + \\
    &\quad\quad\quad \underbrace{\sup_\Pin \sum_{k=m}^{\infty} \frac{k}{k^{2/q}}\EE_P \left [ Y^2 \1\{(k-1)^{1/q} < |Y| \leq k^{1/q}\} \right ]}_{(\star_\geq)} \Bigg \},
  \end{align}
  and we will now separately show that $(\star_\leq) \to 0$ and $(\star_\geq ) \to 0$ as $m \to \infty$ using different arguments. Focusing first on $(\star_\leq)$, we invoke \cref{lemma:phi-inequality} with $p = 2$ and let $\widetilde \varphi(x) \equiv x \widetilde h(x);\ x \geq 0$ be a function where $\widetilde h$ has the following three key properties: (1) it is diverging to $\infty$, (2) it satisfies $a^{2/q} / b^{2/q} \leq a \widetilde h(a) / (b \widetilde h(b)) \equiv \widetilde \varphi(a) / \widetilde \varphi(b)$ whenever $0 \leq a \leq b$ and $b > 0$, and (3) it satisfies $\sup_\Pin \EE_P \left [|Y|^q h(|Y|^q) \right ] < \infty$.
  Writing out $(\star_\leq)$ and exploiting these three properties, we have
  \begin{align}
    (\star_\leq) 
    &= m \cdot \sup_\Pin \EE_P \left [ \frac{Y^2}{m^{2/q}} \1\{|Y| \leq m^{1/q}\} \right ]\\
    \ifverbose 
    &= m \cdot \sup_\Pin \EE_P \left [ \frac{(|Y|^q)^{2/q}}{m^{2/q}} \1\{|Y|^q \leq m\} \right ]\\
    \fi 
    &\leq m \cdot \sup_\Pin \EE_P \left [ \frac{\widetilde \varphi(|Y|^q)}{\widetilde \varphi(m)} \1\{|Y| \leq m^{1/q}\} \right ] \label{eq:second series proof, upper bound from phi}\\
    \ifverbose 
    &\leq \cancel{m} \cdot \sup_\Pin \EE_P \left [ \frac{\widetilde \varphi(|Y|^q)}{\cancel{m}\cdot \widetilde h(m)} \right ]\\
    \fi 
                 &\leq \underbrace{\frac{1}{\widetilde h(m)}}_{ \to 0} \underbrace{\sup_\Pin \EE_P \widetilde \varphi(|Y|^q)}_{< \infty},
  \end{align}
  where \eqref{eq:second series proof, upper bound from phi} follows from property (2) of $\widetilde h$ described above, and the last uses properties (1) and (3).
  Therefore, $\lim_\mto (\star_\leq) = 0$. Turning our focus to $(\star_\geq)$,
  \begin{align}
    (\star_\geq) 
    &\leq \sup_\Pin \sum_{k=m}^{\infty} k \Prob \left ((k-1)^{1/q} < |Y| \leq k^{1/q} \right ),
  \end{align}
  and the above vanishes as $m \to \infty$ by the proof for the first series beginning from \eqref{eq:part of first series used in second series proof}. Putting the analyses for $(\star_\leq)$ and $(\star_\geq)$ together and taking limits as $\mto$ completes the argument for the second series.
  \ifverbose 
    That is,
  \begin{equation}
    \lim_\mto \sup_\Pin \sum_{n=m}^\infty \Var_P Z_n^\leqo  =0.
  \end{equation}
  \fi 

  \paragraph{The third series.} Once again using the fact that $|Y|^q > k$ if and only if $|Y|^q \1\{ |Y|^q > k\} > k$ and writing out the third series for any $ m\geq 1$, we have
  \begin{align}
    \sup_\Pin \sum_{k=m}^\infty \Prob \left ( \left \lvert \frac{Y}{k^{1/q}} \right \rvert > 1 \right ) 
    &= \sup_\Pin \sum_{k=1}^\infty \1(k \geq m) \Prob(|Y|^q \1\{|Y|^q > k \} > k) \\
         &\leq \sup_\Pin \sum_{k=1}^\infty \1(k \geq m) \Prob(|Y|^q \1\{|Y|^q > m \} > k)\\
    \ifverbose 
         &\leq \sup_\Pin \sum_{k=1}^\infty \Prob(|Y|^q  > k)\\
    \fi 
    & \leq \sup_\Pin \EE_P\left (|Y|^q \1\{|Y|^q > m \} \right ),
  \end{align}
  which used the fact that $\Prob (|Y|^q \1\{ |Y|^q > k \} > k) \leq \Prob (|Y|^q \1\{ |Y|^q > m \} > k)$ whenever $k \geq m$ combined with the expectation-tail-probability identity. Taking limits as $\mto$ completes the argument for the third series, concluding the proof of Lemma~\ref{lemma:sufficient-conditions-puniform-kolmogorov-three-series-iid}.
  \ifverbose 
  Some details:
  \begin{align}
    &\sup_\Pin \sum_{k=1}^\infty \1(k \geq m) \Prob(|Y|^q \1\{|Y|^q > m \} > k)\\
    \leq\ &\sup_\Pin \sum_{k=1}^\infty \Prob(|Y|^q \1\{|Y|^q > m \} > k)\\
    \leq\ &\sup_\Pin \EE_P\left (|Y|^q \1\{|Y|^q > m \} \right ),
  \end{align}
  and thus using the $\Pcal$-UI $q^\text{th}$ moment of $Y$, we have that
  \begin{equation}
   \lim_{m \to \infty} \sup_\Pin \sum_{k=m}^\infty \Prob \left ( |Y| > k^{1/q} \right ) = 0,
  \end{equation}
  completing the proof of the third series, and hence the entire lemma.
  \fi 
\end{proof}

\begin{lemma}[Sufficient conditions for the three series with $0 < q < 1$]\label{lemma:sufficient-conditions-m-z-qlessthan1}
  Given the same setup as \cref{lemma:sufficient-conditions-puniform-kolmogorov-three-series-iid}, suppose that $X$ has a $\Pcal$-UI (uncentered) $q^\tth$ moment for some $0 < q < 1$:
  \begin{equation}
    \lim_{m \to \infty}\sup_\Pin \EE_P \left ( |X|^{q} \1\{ |X|^q > m \} \right ) = 0.
  \end{equation}
  Then, the three conditions of the $\Pcal$-uniform Kolmogorov three-series theorem are satisfied for $Z_n := X_n / n^{1/q}$
  with $c = 1$.
\end{lemma}
\begin{proof}
  Once again, consider the truncated random variable $Z_n^\leqo$ given by
  \begin{equation}
    Z_n^\leqo := Z_n \1\{Z_n \leq 1\}.
    \end{equation}
  Let us now separately show that the tails of the three series vanish $\Pcal$-uniformly.
    \paragraph{The first series.}
  Writing out the first series for any $m \geq 1$ and using a similar argument to that of the second series in Lemma~\ref{lemma:sufficient-conditions-puniform-kolmogorov-three-series-iid}, we have that
  \begin{align}
    \sup_\Pin \sum_{n=m}^\infty  \left \lvert \EE_P Z_n^\leqo \right \rvert &\leq \sup_\Pin \sum_{n=m}^\infty \EE_P \left ( \frac{|X|\1(|X| \leq n^{1/q})}{n^{1/q}} \right )\\
    \ifverbose 
    &\leq \sup_\Pin \sum_{n=1}^\infty \sum_{k=1}^n \1\{n \geq k \lor m\} \EE_P \left ( \frac{|X|\1 \{(k-1)^{1/q} < |X| \leq k^{1/q} \}}{n^{1/q}} \right )\\
    &= \sup_\Pin \sum_{k=1}^\infty \sum_{n=k}^\infty \1\{n \geq k \lor m\}\EE_P \left ( \frac{|X|\1 \{(k-1)^{1/q} < |X| \leq k^{1/q} \}}{n^{1/q}} \right )\\
    \fi 
                                                 &\leq \sup_\Pin \sum_{k=1}^\infty \EE_P \left ( |X|\1 \{(k-1)^{1/q} < |X| \leq k^{1/q} \} \right )\cdot \sum_{n=k \lor m}^\infty \frac{1}{n^{1/q}},
  \end{align}
  and thus there exists $C_q > 0$ depending only on $q \in (0, 1)$ so that
  \begin{align}
     \sup_\Pin \sum_{n=m}^\infty  \EE_P Z_n^\leqo 
                                                  &\leq C_q \underbrace{\sup_\Pin \sum_{k=1}^m \frac{m}{m^{1/q}} \cdot \EE_P \left ( |X| \1 \left \{ (k-1)^{1/q} < |X| \leq k^{1/q} \right \} \right )}_{(\star_\leq)}  +\\
                                                  &\quad C_q \underbrace{\sup_\Pin \sum_{k=m}^\infty \frac{k}{k^{1/q}}\cdot \EE_P \left ( |X| \1 \left \{ (k-1)^{1/q} < |X| \leq k^{1/q} \right \} \right )}_{(\star_\geq)},
  \end{align}
  and thus similarly to the proof for the second series in \cref{lemma:sufficient-conditions-puniform-kolmogorov-three-series-iid}, it suffices to show that $(\star_\leq) \to 0$ and $(\star_\geq) \to 0$ as $m \to \infty$. Focusing on $(\star_\leq)$ first, we have
  \begin{align}
    (\star_\leq) 
    &\leq \sup_\Pin m \cdot \EE_P \left (\frac{|X|}{m^{1/q}}\1 \left \{ |X| \leq m^{1/q} \right \} \right ).
  \end{align}
  By \cref{lemma:phi-inequality} applied with $p = 1$, there exists a function $\widetilde \varphi(x) = x \widetilde h(x);\ x\geq 0$ where $\widetilde h > 0$ is a function that (1) is diverging to $\infty$, (2) satisfies $a^{1/q} / b^{1/q} \leq a \widetilde h(a) / (b\widetilde h(b)) \equiv \widetilde \varphi(a) / \widetilde \varphi(b)$ whenever $0 \leq a \leq b$ and $b > 0$, and (3) satisfies
    $\sup_\Pin \EE_P \widetilde \varphi(|X|^q) < \infty$. Writing out $(\star_\leq)$ and exploiting properties (1)--(3), we have
  \begin{align}
    (\star_\leq) &\leq \sup_\Pin m \cdot \EE_P \left (\frac{|X|}{m^{1/q}}\1 \left \{ |X| \leq m^{1/q} \right \} \right )\\
                 &\leq \sup_\Pin m \cdot \EE_P \left (\frac{\widetilde \varphi(|X|^q)}{\widetilde \varphi(m)} \right )\\
    &=  \frac{\cancel{m}}{\cancel{m} \widetilde h(m)} \underbrace{\sup_\Pin\EE_P \widetilde \varphi(|X|^q)}_{< \infty},
  \end{align}
  and since $\lim_\mto \widetilde h(m) = \infty$, we have that $\lim_\mto (\star_\leq) = 0$.
  Moving to $(\star_\geq)$, we have that
  \begin{align}
    (\star_\geq) &= \sup_\Pin \sum_{k=m}^\infty \frac{k}{k^{1/q}}\cdot \EE_P \left ( |X| \1 \left \{ (k-1)^{1/q} < |X| \leq k^{1/q} \right \} \right )   \\
    \ifverbose 
                 &\leq \sup_\Pin \sum_{k=m}^\infty \frac{k}{\cancel{k^{1/q}}}\cdot \EE_P \left (\cancel{k^{1/q}} \1 \left \{ (k-1)^{1/q} < |X| \leq k^{1/q} \right \} \right ) \\
    \fi 
&\leq \sup_\Pin \sum_{k=m}^\infty k\cdot \Prob \left ( (k-1)^{1/q} < |X| \leq k^{1/q} \right ) \\
    \ifverbose 
&= \sup_\Pin \sum_{k=1}^\infty \sum_{n=1}^k \1(k \geq m)\cdot \Prob \left ( (k-1)^{1/q} < |X| \leq k^{1/q} \right ) \\
    \fi 
    \ifverbose 
                 &= \sup_\Pin \sum_{n=1}^\infty \sum_{k=n \lor m}^\infty \Prob \left ( (k-1)^{1/q} < |X| \leq k^{1/q} \right ) \\
                 &= \sup_\Pin \sum_{n=0}^\infty \Prob \left ( |X|^q > n \lor m \right ) \\
                 &= \sup_\Pin \sum_{n=0}^\infty \Prob \left ( |X|^q\1 \left \{ |X|^q > n \lor m \right \} > n \lor m \right ) \\
    \fi 
&\leq \sup_\Pin \Prob \left ( |X|^q \1\{ |X|^q > m \} > m \right ) + \sup_\Pin \sum_{n=1}^\infty \Prob \left ( |X|^q\1 \left \{ |X|^q > n \lor m \right \} > n \lor m \right ) \\
    \ifverbose 
                 &\leq \sup_\Pin \Prob \left ( |X|^q > m \right ) + \sup_\Pin \sum_{n=1}^\infty \Prob \left ( |X|^q\1 \left \{ |X|^q > m \right \} > n \right ) \\
    \fi 
                 &\leq \underbrace{\sup_\Pin \EE_P |X|^q}_{< \infty} / m + \sup_\Pin \EE_P \left ( |X|^q\1 \left \{ |X|^q > m \right \} \right ),
  \end{align}
  and thus $\lim_\mto (\star_\geq) = 0$. Putting $(\star_\leq)$ and $(\star_\geq)$ together and taking limits as $\mto$ completes the argument for the first series.

  \paragraph{The second series.} The second series proceeds similarly to that of \cref{lemma:sufficient-conditions-puniform-kolmogorov-three-series-iid}. Indeed, using the same arguments therein, notice that there exists a constant $C_q > 0$ depending only on $q \in (0, 1)$ so that
  \begin{align}
    \sup_\Pin \sum_{n=m}^\infty \Var_P Z_n^\leqo
                                             &\leq C_q \Bigg \{ \underbrace{\sup_\Pin \sum_{k=1}^{m} \frac{m}{m^{2/q}} \cdot \EE_P \left [ Y^2 \1\{(k-1)^{1/q} < |Y| \leq k^{1/q}\} \right ]}_{(\dagger_\leq)} + \\
    &\quad\quad\quad \underbrace{\sup_\Pin \sum_{k=m}^{\infty} \frac{k}{k^{2/q}}\EE_P \left [ Y^2 \1\{(k-1)^{1/q} < |Y| \leq k^{1/q}\} \right ]}_{(\dagger_\geq)} \Bigg \},
  \end{align}
  Focusing first on $(\dagger_\leq)$, since $a/b \leq (a/b)^2$ whenever $0 \leq a \leq b$ and $b > 0$, we have that
  \begin{align}
    (\dagger_\leq) 
    &= m \cdot \sup_\Pin \EE_P \left [ \frac{Y^2}{m^{2/q}} \1\{|Y| \leq m^{1/q}\} \right ]\\
    &\leq m \cdot \sup_\Pin \EE_P \left [ \frac{|Y|}{m^{1/q}} \1\{|Y| \leq m^{1/q}\} \right ],
  \end{align}
  and now by the same argument used to show that $(\star_\leq)\to 0$ in the first series above, we have that $(\dagger_\leq) \to 0$ as $m \to 0$. Turning to $(\dagger_\geq)$, we have by the same argument as in the second series of \cref{lemma:sufficient-conditions-puniform-kolmogorov-three-series-iid} (and in particular, starting from \eqref{eq:part of first series used in second series proof}) that
  \begin{align}
    (\dagger_\geq) &\leq \sup_\Pin \sum_{k=m}^\infty k \Prob \left ( (k-1)^{1/q} < |Y| \leq k^{1/q} \right ) \\
    & \leq \sup_\Pin \EE_P \left ( |Y|^q \1\{ |Y|^q > m \} \right ),
  \end{align}
  and hence by the $\Pcal$-UI of the $q^\tth$ moment, we have that $(\dagger_\geq) \to 0$ as $m \to \infty$. Putting the limits for $(\dagger_\leq)$ and $(\dagger_\geq)$ together completes the argument for the second series.
  \paragraph{The third series.} The proof for the series proceeds identically to that of \cref{lemma:sufficient-conditions-puniform-kolmogorov-three-series-iid} when $1 < q < 2$. This completes the proof of \cref{lemma:sufficient-conditions-m-z-qlessthan1}.
\end{proof}

\begin{lemma}\label{lemma:phi-inequality}
  Let $0 < q < p \leq 2$ and suppose that the (potentially uncentered) $q^\tth$ moment of $Y$ is UI, meaning that
  \begin{equation}
    \lim_\mto \sup_\Pin \EE_P \left ( |Y|^q \1(|Y|^q > m) \right ) = 0.
  \end{equation}
  Then there exists a function $\widetilde \varphi$ that can be written as $\widetilde \varphi(x) = x \widetilde h(x)$ for any $x \geq 0$ where $\widetilde h$ is diverging to $\infty$ and satisfies
  \begin{equation}
    \frac{a^{p/q}}{b^{p/q}}\leq \frac{a\widetilde h(a)}{b\widetilde h(b)}\quad\text{for all $0 \leq a \leq b$ and $b > 0$} ,
  \end{equation}
  and so that $\widetilde \varphi(|Y|^q)$ is $\Pcal$-uniformly bounded in expectation:
  \begin{equation}
    \sup_\Pin \EE_P \widetilde \varphi(|Y|^q) < \infty.
  \end{equation}
\end{lemma}

\begin{proof}
  By the criterion of uniform integrability due to Charles de la Vall\'ee Poussin \citep{chong1979theorem,hu2011note,chandra2015vallee}, we have that there exists a function $h : [0, \infty) \to [0, \infty)$ diverging to $\infty$ so that
  \begin{equation}
    \sup_\Pin \EE_P \left [ |Y|^q h(|Y|^q) \right ] < \infty.
  \end{equation}
  Moreover, $h$ can be assumed to be concave, strictly increasing, and starting at $h(0) = 0$ (see \cref{lemma:canizo-vallee-poussin}). Let $h^\star(x) := h(x) + 1$ for each $x$, and use $h^\star$ in \cref{lemma:upper bound by phi} to obtain a function $\widetilde h$ that concave, strictly increasing, and starting at $\widetilde h(0) \geq 1$, and in addition, $\widetilde h(x) \leq h(x) + 1$ for all $x \geq 0$ so that
  \begin{equation}
    \frac{a^{p/q}}{b^{p/q}} \leq \frac{a \widetilde h(a)}{b \widetilde h(b)}
  \end{equation}
  whenever $0 \leq a \leq b$ and $b > 0$. Noticing that since $\widetilde h \leq h + 1$,
  \begin{equation}
    \sup_\Pin \EE_P \left [ |Y|^q \widetilde h(|Y|^q) \right ] \leq \supP \EE_P \left [ |Y|^q \right ] + \supP \EE_P \left [ |Y|^q h(|Y|^q) \right ] < \infty
  \end{equation}
  completes the proof.
\end{proof}

\begin{lemma}\label{lemma:upper bound by phi}
  Let $h^\star : \RR^{\geq 0} \to \RR^{\geq 0}$ be a function that is concave, strictly increasing, diverging to $\infty$, and beginning at $h^\star(0) \geq 1$. Let $1 < q < p \leq 2$. Then there exists a function $\widetilde h$ (depending on $p$ and $q$) with all the aforementioned properties and in addition, $\widetilde h(x) \leq h^\star(x)$ for all $x \geq 0$ so that for any real $a \geq 0$ and $b > 0$ such that $a \leq b$,
  \begin{equation}
    \frac{a^{p/q}}{b^{p/q}} \leq \frac{a\widetilde h(a)}{b\widetilde h(b)}.
  \end{equation}
\end{lemma}
\begin{proof}
  Throughout, denote $\delta := p/q -1 \in (0, 1)$. Choose $\widetilde h(x) := (h^\star(x))^{\delta}$ and notice that since $\delta \in (0, 1)$ and $h^\star(x) \geq 1$, we have that $\widetilde h$ is also concave, strictly increasing, diverging to $\infty$, and beginning at $\widetilde h(0) \geq 1$.
  Clearly the result of the lemma holds for this choice of $\widetilde h$ when $a = 0$ since $\widetilde h(0) \geq 1$ and $\widetilde h$ is strictly increasing so let us focus on the case where $0 < a \leq b$. Showing the desired result is equivalent to showing that
  \begin{equation}
      a^{-\delta} \widetilde h(a) \geq b^{-\delta} \widetilde h(b) \quad\text{for all $a \leq b$}
  \end{equation}
  or in other words, that $x^{-\delta} \widetilde h(x)$ is nonincreasing on $x > 0$. By \cref{lemma:nonincreasingness of h/x}, we have that $x^{-1}h^\star(x)$ is nonincreasing on $x > 0$, and hence so is $x^{-\delta} \widetilde h(x) \equiv  (x^{-1} h^\star(x) )^\delta$. This completes the proof.
\end{proof}

\begin{lemma}\label{lemma:nonincreasingness of h/x}
  Let $h : \RR^{\geq 0} \to \RR^{\geq 0}$ be a function that is concave and beginning at $h(0) \geq 0$. Then, the function $h(x) / x$ 
  is nonincreasing on $x > 0$.
\end{lemma}
\begin{proof}
  Let $0 < a < b$. Our goal is to show that $h(a) / a \geq h(b) / b$. Indeed, notice that by concavity of $h$ and the fact that $h(0) \geq 0$, we have that
  \begin{align}
    h(a) = h\left( \left(1-\frac{a}{b}\right) \cdot 0 + \frac{a}{b} \cdot b \right) \geq
    \left(1-\frac{a}{b}\right) h(0) + \frac{a}{b} h(b) \geq
    \frac{a}{b} h(b).
  \end{align}
  Dividing by $a$ yields the desired result.
\end{proof}

\begin{lemma}[Satisfying the three series for stochastic nonincreasingness when $1 < q < 2$]\label{lemma:satisfying uniform boundedness three series for q bigger than 1}
  Suppose that $\infseqn{X_n}$ are \iid{} and have a $\Pcal$-UI $q^\tth$ moment for $1 < q < 2$. Then the three series for uniform stochastic nonincreasingness in \cref{theorem:boundedness three series theorem} are satisfied for the random variable $Z_n := (X_n - \EE_P(X)) / n^{1/q}$.
\end{lemma}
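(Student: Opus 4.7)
The goal is to verify that the three sums in \cref{theorem:boundedness three series theorem} vanish $\Pcal$-uniformly as $B \to \infty$ when $Z_n = Y_n/n^{1/q}$ with $Y := X - \EE_P X$, so I plan to reduce each of them, after a truncation/interval-splitting manipulation essentially identical to the one in the proof of \cref{lemma:sufficient-conditions-puniform-kolmogorov-three-series-iid}, to a constant (depending only on $q$) times $\EE_P|Y|^q / B^q$. The $\Pcal$-UI hypothesis on the $q^\tth$ moment enters only through its consequence that $\sup_{P \in \Pcal} \EE_P|Y|^q < \infty$, and the prefactor $B^{-q}$ then supplies the uniform smallness.

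For the first series, I would start by exploiting the centering identity $\EE_P[Y\1\{|Y|\le Bn^{1/q}\}] = -\EE_P[Y\1\{|Y|>Bn^{1/q}\}]$, partition the event $\{|Y|^q > B^q n\}$ into the slabs $\{B^q k < |Y|^q \le B^q(k+1)\}$ for $k \ge n$, and then swap the order of summation. Using the estimate $\sum_{n=1}^k n^{-1/q} \le C_q\, k/(k+1)^{1/q}$ (valid because $1/q < 1$) collapses the double sum to a constant times $\sum_{k \ge 1} k\,\PP_P(B^q k < |Y|^q \le B^q(k+1))$, which in turn equals $\sum_{i \ge 1}\PP_P(|Y|^q > B^q i) \le \EE_P|Y|^q/B^q$ by the expectation-tail-sum identity.

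The second series is the step I expect to be most delicate. The naive bound $\Var \le \EE[\,\cdot\,^2]$ followed by dropping the truncation indicator gives $\sum_n n^{-2/q}\EE_P Y^2$, which is infinite whenever $\EE_P Y^2 = \infty$, so the truncation must genuinely be exploited. My plan is to split on whether $|Y| \le B$ or $|Y| > B$. On $\{|Y|\le B\}$ the truncation indicator is identically $1$ across $n$, the pointwise inequality $Y^2 \le B^{2-q}|Y|^q$ holds, and the convergent series $\sum_n n^{-2/q}$ (using $q<2$) produces a contribution of order $\EE_P|Y|^q/B^q$. On $\{|Y|>B\}$ the indicator is nonzero only for $n \ge k^\star := \lceil|Y|^q/B^q\rceil$, and the tail estimate $\sum_{n\ge k^\star}n^{-2/q} \le C_q (k^\star)^{1-2/q}$ combined with a direct algebraic simplification yields the same order $\EE_P|Y|^q/B^q$.

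The third series is immediate: since the $X_n$ are identically distributed, $\sum_n \PP_P(|Z_n|/B > 1) = \sum_n \PP_P(|Y|^q/B^q > n) \le \EE_P|Y|^q/B^q$ by the standard expectation-tail-sum identity. Summing the three bounds, taking the supremum over $P \in \Pcal$, and letting $B \to \infty$ completes the plan. Worth emphasizing is that the full uniform tail-decay content of $\Pcal$-UI is not actually required here (unlike in \cref{lemma:sufficient-conditions-puniform-kolmogorov-three-series-iid}, where it is); only uniform boundedness of the $q^\tth$ moment is used, and this is a strictly weaker consequence of $\Pcal$-UI.
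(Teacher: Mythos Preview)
Your proposal is correct and follows essentially the same approach as the paper: the first series via the centering identity and slab decomposition, the third via the expectation--tail-sum identity, and the second by bounding variance by the second moment and splitting according to whether $|Y/B|\le 1$ or $|Y/B|>1$ (the paper phrases this as separating the $k=1$ slab from the $k\ge 2$ slabs, but it is the same split). Your remark that only $\sup_{P\in\Pcal}\EE_P|Y|^q<\infty$ is used---not the full tail-decay content of $\Pcal$-UI---also matches the paper's bounds, which all terminate in $C_q B^{-q}\sup_{P\in\Pcal}\EE_P|Y|^q$.
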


\begin{proof}
  We will handle the three series separately below. Throughout, let $Y_n := X_n - \EE_P(X)$ and $Z_{n,B}^\leqo := (Y/B) \1 \{|(Y/B)| \leq n^{1/q}\} / n^{1/q}$.
  \paragraph{The first series.}

  Writing out the first series $\sup_\Pin \sum_{n=1}^\infty |\EE_P ( Z_{n,B}^\leqo )|$ and performing calculations analogous to those for the first series in Lemma~\ref{lemma:sufficient-conditions-puniform-kolmogorov-three-series-iid}, we have
  \begin{align}
    \sup_\Pin \sum_{n=1}^\infty  |\EE_P Z_{n,B}^\leqo| &= \sup_\Pin \sum_{n=1}^\infty \left \lvert \EE_P \left ( \frac{(Y/B)\1(|(Y/B)| \leq n^{1/q})}{n^{1/q}} \right ) \right \rvert\\
    \ifverbose 
    &= \sup_\Pin \sum_{n=1}^\infty \left \lvert \EE_P \left ( \frac{-(Y/B)\1(|Y/B| > n^{1/q})}{n^{1/q}} \right ) \right \rvert\\
    \fi 
    \ifverbose 
    &\leq \sup_\Pin \sum_{n=1}^\infty \EE_P \left ( \frac{|Y/B|\1(|Y/B| > n^{1/q})}{n^{1/q}} \right )\\
                                                  &=\sup_\Pin \sum_{n=1}^\infty \sum_{k=n}^\infty \EE_P \left ( \frac{|Y/B| \1(k^{1/q} < |Y/B| \leq (k+1)^{1/q})}{n^{1/q}} \right )\\
                                                  &= \sup_\Pin \sum_{k=1}^\infty \sum_{n=1}^k \EE_P \left ( \frac{|Y/B| \1(k^{1/q} < |Y/B| \leq (k+1)^{1/q})}{n^{1/q}} \right )\\
    \fi 
                                                 &\leq \sup_\Pin \sum_{k=1}^\infty \EE_P \left ( |Y/B| \1(k^{1/q} < |Y/B| \leq (k+1)^{1/q}) \right ) \cdot \sum_{n=1}^k \frac{1}{n^{1/q}}.
  \end{align}
  Now, there exists some constant $C_q > 0$ depending only on $q$ so that $\sum_{n=1}^k 1/n^{1/q} \leq C_q k / (k+1)^{1/q}$ and thus
  \begin{align}
    \sup_\Pin \sum_{n=1}^\infty | \EE_P Z_{n,B}^\leqo |
    &\leq \sup_\Pin \sum_{k=1}^\infty \EE_P \left ( |Y/B| \1(k^{1/q} < |Y/B| \leq (k+1)^{1/q}) \right ) \cdot C_q \frac{k}{(k+1)^{1/q}} \\
    \ifverbose 
                                                  &\leq \sup_\Pin \sum_{k=1}^\infty \cancel{(k+1)^{1/q}} \Prob \left ( k^{1/q} < |Y/B| \leq (k+1)^{1/q} \right ) \cdot C_q \frac{k}{\cancel{(k+1)^{1/q}}} \\
    \fi 
                                                                                               &\leq C_q \cdot \sup_\Pin \sum_{k=1}^\infty k \Prob \left ( k^{1/q} < |Y/B| \leq (k+1)^{1/q} \right )\\
    \ifverbose 
     &= C_q \cdot \sup_\Pin \sum_{k=1}^\infty \sum_{n=1}^k \Prob \left ( k^{1/q} < |Y/B| \leq (k+1)^{1/q} \right ) \\
                                                   &= C_q \cdot \sup_\Pin \sum_{n=1}^\infty \sum_{k=n}^\infty \Prob \left ( k^{1/q} < |Y/B| \leq (k+1)^{1/q} \right ) \\
    \fi 
                            &= C_q \cdot \sup_\Pin \sum_{n=1}^\infty \Prob \left ( |Y / B|^q > n \right )\\
    \ifverbose 
                            &\leq C_q \cdot \sup_\Pin \EE_P \left ( |Y/B|^q \right )\\
    \fi 
                            &\leq C_q \cdot B^{-q} \underbrace{\sup_\Pin \EE_P \left ( |Y|^q \right )}_{< \infty}.
  \end{align}
  Taking limits as $B \to \infty$, we have the desired result:
  \begin{equation}
    \lim_{B \to \infty} \sup_\Pin \sum_{n=1}^\infty |\EE_P Z_{n,B}^\leqo| = 0,
  \end{equation}
  completing the proof for the first series for $\Pcal$-uniform stochastic nonincreasingness.

  \paragraph{The second series.}
  Writing out the second series $\sup_\Pin \sum_{n=1}^\infty \Var_P (Z_{n,B}^\leqo)$ for any $B >0 $, we have
  \begin{align}
    \sup_\Pin \sum_{n=1}^\infty \Var_P Z_{n,B}^\leqo &=  \sup_\Pin \sum_{n=1}^\infty \left \{ \EE_P [ (Z_{n,B}^\leqo)^2] - [\EE_P Z_{n,B}^\leqo]^2 \right \} \\
    \ifverbose 
                                                     &\leq \sup_\Pin \sum_{n=1}^\infty \EE_P [ (Z_{n,B}^\leqo)^2]\\
    \fi 
                                             &\leq \sup_\Pin \sum_{n=1}^\infty \EE_P \left [ \frac{(Y/B)^2}{n^{2/q}} \1\{|Y/B| \leq n^{1/q}\} \right ] \\
    \ifverbose 
                                             &= \sup_\Pin \sum_{n=1}^\infty \sum_{k=1}^n \EE_P \left [ \frac{(Y/B)^2}{n^{2/q}} \1\{(k-1)^{1/q} < |Y/B| \leq k^{1/q}\} \right ] \\
                                             &= \sup_\Pin \sum_{k=1}^\infty \sum_{n=k }^\infty  \EE_P \left [ \frac{(Y/B)^2}{n^{2/q}} \1\{(k-1)^{1/q} < |Y/B| \leq k^{1/q}\} \right ] \\
    \fi 
                                                 &= \sup_\Pin \sum_{k=1}^\infty \EE_P \left [ (Y/B)^2 \1\{(k-1)^{1/q} < |Y/B| \leq k^{1/q}\} \right ] \sum_{n=k}^\infty \frac{1}{n^{2/q}},
  \end{align}
  and since there exists a constant $C_q > 0$ depending only on $q$ so that $\sum_{n=k}^\infty 1/n^{2/q} \leq C_q k / k^{2/q}$, we have
  \begin{align}
    \sup_\Pin \sum_{n=1}^\infty \Var_P Z_{n,B}^\leqo &\leq C_q \sup_\Pin \sum_{k=1}^\infty \frac{k}{k^{2/q}} \cdot \EE_P \left [ (Y/B)^2 \1\{(k-1)^{1/q} < |Y/B| \leq k^{1/q}\} \right ].
  \end{align}
  Separating the first term from the rest of the sum, we can write the above as
  \begin{align}
    &C_q \sup_\Pin \sum_{k=1}^\infty \frac{k}{k^{2/q}} \cdot \EE_P \left [ (Y/B)^2 \1\{(k-1)^{1/q} < |Y/B| \leq k^{1/q}\} \right ]\\
    \leq\ &\underbrace{C_q\sup_\Pin \EE_P \left [ (Y/B)^2 \1\{|Y/B| \leq 1\} \right ]}_{(\star)} + \\
    &\underbrace{C_q \sup_\Pin \sum_{k=2}^\infty \frac{k}{k^{2/q}} \cdot \EE_P \left [ (Y/B)^2 \1\{(k-1)^{1/q} \leq |Y/B| \leq k^{1/q}\} \right ]}_{(\dagger)}.
  \end{align}
  Notice that $(Y/B)^2 \1\{ |Y/B| \leq 1 \} \leq |Y/B|^q \1\{ |Y/B| \leq 1 \}$ with $P$-probability one for every $P \in \Pcal$ since $0 < q < 2$. Therefore, the first term $(\star)$ of the aforementioned sum can be upper-bounded as
  \begin{equation}
    (\star) = C_q \sup_\Pin \EE_P \left [ (Y/B)^2 \1\{|Y/B| \leq 1\} \right ] \leq \frac{C_q}{B^q} \sup_\Pin \EE_P |Y|^q.
  \end{equation}
  Turning now to the second term $(\dagger)$, we have
  \begin{align}
    (\dagger) &= C_q \sup_\Pin \sum_{k=2}^\infty \frac{k}{k^{2/q}} \cdot \EE_P \left [ (Y/B)^2 \1\{(k-1)^{1/q} < |Y/B| \leq k^{1/q}\} \right ]\\
    &\leq C_q \sup_\Pin \sum_{k=2}^{\infty} \frac{k}{\cancel{k^{2/q}}}\EE_P \left [ \cancel{k^{2/q}} \1\{(k-1)^{1/q} < |Y/B| \leq k^{1/q}\} \right ] \\
    \ifverbose 
                                                 &= C_q \sup_\Pin \sum_{k=2}^{\infty} k\Prob \left ( (k-1)^{1/q} < |Y/B| \leq k^{1/q} \right )\\
                                                 &= C_q \sup_\Pin \sum_{k=2}^{\infty} \sum_{n=1}^k\Prob \left ( (k-1)^{1/q} < |Y/B| \leq k^{1/q} \right )\\
    \fi 
                                                 &= C_q \sup_\Pin \sum_{n=1}^{\infty} \sum_{k=n}^\infty \1\{k \geq 2\} \Prob \left ( (k-1)^{1/q} < |Y/B| \leq k^{1/q} \right )\\
              &= C_q \sup_\Pin \Bigg [ \sum_{k=2}^\infty \Prob \left ( (k-1)^{1/q} < |Y/B| \leq k^{1/q} \right ) +
    \sum_{n=2}^\infty \sum_{k=n}^\infty \Prob \left ( (k-1)^{1/q} < |Y/B| \leq k^{1/q} \right )\Bigg ]\\
    \ifverbose 
              &= C_q \sup_\Pin \Bigg [ 2\sum_{k=2}^\infty \Prob \left ( (k-1)^{1/q} < |Y/B| \leq k^{1/q} \right ) +\\
    &\quad\quad\quad\quad\quad \sum_{n=3}^\infty \sum_{k=n}^\infty \Prob \left ( (k-1)^{1/q} < |Y/B| \leq k^{1/q} \right )\Bigg ]\\
    \fi 
              &\leq C_q \sup_\Pin \Bigg [ 2 \sum_{n=2}^\infty \sum_{k=n}^\infty \Prob \left ( (k-1)^{1/q} < |Y/B| \leq k^{1/q} \right )\Bigg ]\\
              &\leq 2C_q \sup_\Pin \sum_{n=2}^\infty \Prob \left (|Y/B| >  (n-1)^{1/q} \right )\\
    \ifverbose 
                                                 &= 2C_q  \sup_\Pin \sum_{n=1}^{\infty} \Prob \left ( |Y/B|^q > n \right )\\
                                                 &\leq 2 C_q \sup_\Pin \EE_P \left ( |Y/B|^q \right )\\
    \fi 
                                                 &\leq \frac{2C_q }{B^q} \sup_\Pin \EE_P \left ( |Y|^q \right ).
  \end{align}
  Putting the bounds on $(\star)$ and $(\dagger)$ together, we have
  \begin{align}
    \sup_\Pin \sum_{n=1}^\infty \Var_P Z_{n,B}^\leqo &\leq (\star) + (\dagger) \leq \frac{3 C_q}{B^q} \underbrace{\sup_\Pin \EE_P |Y|^q}_{< \infty}, 
  \end{align}
  so that when we take limits as $B \to \infty$, we obtain the desired result,
  \begin{equation}
    \lim_{B \to \infty} \sum_{n=1}^\infty \Var_P Z_{n,B}^\leqo = 0,
  \end{equation}
  which completes the proof for the second series.
  
  \paragraph{The third series.}
  Writing out the series $\sup_\Pin \sum_{k=1}^\infty \Prob \left ( |Y / k^{1/q}| > B \right )$ for any $B > 0$ and using the expectation-tail sum identity, we have
  \begin{align}
    \sup_\Pin \sum_{k=1}^\infty \Prob \left ( |Y / k^{1/q}| > B \right )
    = \sup_\Pin \sum_{k=1}^\infty \Prob \left ( |Y / B|^q > k \right )
    \leq \frac{1}{B^q} \underbrace{\sup_\Pin \EE_P \left ( |Y|^q \right )}_{< \infty},
  \end{align}
  and we note that $\sup_\Pin \EE_P \left ( |Y|^q \right ) < \infty$ above since uniform integrability implies uniform boundedness. Consequently, we have that
  \begin{equation}
    \lim_{B \to \infty} \sum_{k=1}^\infty \Prob \left ( |Y / k^{1/q}| > B \right ) = 0,
  \end{equation}
  completing the proof for the third series and hence the entire lemma.
\end{proof}

\begin{lemma}[Satisfying the three series for stochastic nonincreasingness when $0 < q < 1$]\label{lemma:satisfying uniform boundedness three series for q smaller than 1}
  Suppose that $X$ has a $\Pcal$-UI (uncentered) $q^\tth$ moment for $0 < q < 1$. Then the three series for uniform stochastic nonincreasingness in \cref{theorem:boundedness three series theorem} are satisfied for the random variable $Z_n := X_n/ n^{1/q}$.
\end{lemma}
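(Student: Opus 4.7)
The plan is to verify each of the three series conditions of \cref{theorem:boundedness three series theorem} for $Z_n := X_n/n^{1/q}$, dominating each by a constant multiple of $B^{-q}\sup_\Pin \EE_P|X|^q$, which tends to $0$ as $B\to\infty$. Since $\Pcal$-UI of the $q^\tth$ moment implies $\sup_\Pin \EE_P|X|^q<\infty$ (it can be decomposed into a UI tail and a bounded piece), this is enough.

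The third series is immediate: rewriting $\PP_P(|X|/(Bn^{1/q})>1)=\PP_P(|X|^q/B^q>n)$ and applying the expectation-tail sum identity to the nonnegative variable $|X|^q/B^q$ yields
\[
\sum_{n=1}^\infty \PP_P(|X|/(Bn^{1/q})>1)\ \leq\ B^{-q}\EE_P|X|^q,
\]
uniformly in $P\in\Pcal$.

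For the first series, since the truncation is uncentered and $\mu(P;q)=0$, I would bound $|\EE_P Z_{n,B}^\leqo|$ by $\EE_P[|X|/(Bn^{1/q})\1\{|X|\leq Bn^{1/q}\}]$ and then mimic the layered decomposition used in \cref{lemma:sufficient-conditions-m-z-qlessthan1}: partition the truncation region into shells $\{B(k-1)^{1/q}<|X|\leq Bk^{1/q}\}$, swap the order of summation, and bound the resulting inner sum $\sum_{n\geq k}n^{-1/q}$ by $C_q k^{1-1/q}$ (which converges since $0<q<1$ gives $1/q>1$). On each shell, the interpolation $|X|=|X|^q|X|^{1-q}\leq |X|^q (Bk^{1/q})^{1-q}$ is valid because of the indicator, and the resulting factor $k^{(1-q)/q}=k^{1/q-1}$ cancels against $k^{1-1/q}$. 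What remains telescopes over the shells back into $C_q B^{-q}\EE_P|X|^q$. The second series follows the same template with $X^2$ in place of $|X|$: bound $\Var_P(Z_{n,B}^\leqo)\leq \EE_P[(Z_{n,B}^\leqo)^2]$, carry out the same layered decomposition over the same shells, and use $\sum_{n\geq k}n^{-2/q}\leq C'_q k^{1-2/q}$ (again convergent since $2/q>1$) together with $X^2\leq |X|^q(Bk^{1/q})^{2-q}$ on each shell. Exponents match exactly to yield a bound of the form $C'_q B^{-q}\EE_P|X|^q$.

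The main obstacle is isolating the right interpolation $|X|^r\leq |X|^q(Bk^{1/q})^{r-q}$ for $r\in\{1,2\}$ on the truncation region: naive pointwise bounds in either series produce divergent inner sums (for example $\sum n^{-1}$ in the variance calculation), and the interpolation is what converts $r^\tth$-moment quantities back into $q^\tth$-moment ones at the price of a single power of $B$. Once this is in place the argument is essentially bookkeeping, closely parallel to \cref{lemma:sufficient-conditions-m-z-qlessthan1} and \cref{lemma:satisfying uniform boundedness three series for q bigger than 1}, with the crucial difference that the $n$-tail sums $\sum_{n\geq k}n^{-s/q}$ (rather than head sums) are used because $s/q>1$ when $0<q<1$.
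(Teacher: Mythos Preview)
Your proposal is correct and follows the same overall architecture as the paper: shell decomposition $\{(k-1)^{1/q}<|X/B|\leq k^{1/q}\}$, swap of summation order, and the tail estimate $\sum_{n\geq k}n^{-s/q}\leq C_q k^{1-s/q}$ for $s\in\{1,2\}$ (convergent since $s/q>1$ when $0<q<1$). The third series is handled identically.

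The only substantive difference is in how $|X|^r$ ($r\in\{1,2\}$) is converted back to $|X|^q$ on each shell. The paper uses the crude bound $|X/B|^r\leq k^{r/q}$ on shell $k$, which forces a separate treatment of $k=1$ (where instead $|X/B|^r\1\{|X/B|\leq 1\}\leq |X/B|^q$ is used) and a second summation swap to turn $\sum_{k\geq 2}k\,\PP_P(\text{shell }k)$ into tail probabilities and then into $B^{-q}\EE_P|X|^q$. Your interpolation $|X|^r=|X|^q|X|^{r-q}\leq |X|^q(Bk^{1/q})^{r-q}$ works uniformly over all shells and makes the exponents cancel in one line, yielding $C_q B^{-q}\EE_P|X|^q$ directly without a case split or a second swap. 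This is a modest streamlining of the same argument rather than a different route; the paper's version is slightly more explicit about the cancellation but at the cost of a few more steps.
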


\begin{proof}
  Similar to satisfying the conditions of the $\Pcal$-uniform Kolmogorov three-series theorem as in \cref{lemma:sufficient-conditions-m-z-qlessthan1}, satisfying the conditions of the $\Pcal$-uniform stochastic nonincreasingness three-series theorem proceeds identically for the second and third series, and thus we focus solely on the first series here.
  Throughout, let $Z_{n,B}^\leqo := (X/B) \1 \{(X/B)| \leq n^{1/q}\} / n^{1/q}$.
  Writing out $\sup_\Pin \sum_{n=1}^\infty | \EE_P (Z_{n,B}^\leqo) |$ for any $B > 0$,
  \begin{align}
    &\sup_\Pin \sum_{n=1}^\infty \left \lvert \EE_P Z_{n,B}^\leqo \right \rvert\\
    \leq\ &\sup_\Pin \sum_{n=1}^\infty \EE_P \left ( \frac{|X/B|\1\{ |X/B| \leq n^{1/q} \}}{n^{1/q}} \right )\\
    \ifverbose 
    =\ &\sup_\Pin \sum_{n=1}^\infty \sum_{k=1}^n \EE_P \left ( \frac{|X/B|\1 \{(k-1)^{1/q} < |X/B| \leq k^{1/q}\}}{n^{1/q}} \right )\\
    \fi 
    =\ &\sup_\Pin \sum_{k=1}^\infty  \EE_P \left ( |X/B|\1 \{(k-1)^{1/q} < |X/B| \leq k^{1/q}\} \right )\cdot \sum_{n=k}^\infty \frac{1}{n^{1/q}}.
  \end{align}
  Now, following techniques from earlier proofs, notice that since $0 < q < 1$, there exists a constant $C_q$ depending only on $q$ so that $ \sum_{n=k}^\infty 1/n^{1/q} \leq C_q k / k^{1/q}$. Breaking up the sum similarly to how we did for the second series of uniform stochastic nonincreasingness in \cref{lemma:satisfying uniform boundedness three series for q bigger than 1}, we have
  \begin{align}
    \sup_\Pin \sum_{n=1}^\infty \left \lvert \EE_P Z_{n,B}^\leqo \right \rvert &\leq \sup_\Pin \sum_{k=1}^\infty \EE_P \left ( |X/B|\1 \{(k-1)^{1/q} < |X/B| \leq k^{1/q} \} \right ) \cdot C_q \frac{k}{k^{1/q}}\\
                                                                               &\leq \underbrace{C_q \sup_\Pin \EE_P \left ( |X/B| \1 \{ |X/B| \leq 1 \right )}_{(\star)} +\\
    &\quad \underbrace{C_q \sup_\Pin \sum_{k=2}^\infty \EE_P \left ( |X/B|\1 \{(k-1)^{1/q} < |X/B| \leq k^{1/q} \} \right ) \frac{k}{k^{1/q}}}_{(\dagger)}.
  \end{align}
  First looking at the first term $(\star)$, we notice that $|X/B| \1\{ |X/B| \leq 1 \} \leq |X/B|^q \1\{ |X/B| \leq 1\}$ with $P$-probability one for every $P \in \Pcal$ since $0 < q < 1$, and thus
  \begin{equation}
    (\star) \leq \frac{C_q}{B^q} \sup_\Pin \EE_P |X|^q.
  \end{equation}
  Turning to the second term $(\dagger)$, we have that
  \begin{align}
    (\dagger) 
              &\leq C_q \sup_\Pin \sum_{k=2}^\infty \EE_P \left ( \cancel{k^{1/q}} \1 \{(k-1)^{1/q} < |X/B| \leq k^{1/q} \} \right ) \frac{k}{\cancel{k^{1/q}}}\\
    \ifverbose 
              &= C_q \sup_\Pin \sum_{k=1}^\infty \sum_{n=1}^k\1\{ k \geq 2\} \Prob \left ( (k-1)^{1/q} < |X/B| \leq k^{1/q} \right ) \\
              &= C_q \sup_\Pin \sum_{n=1}^\infty \sum_{k=n}^\infty \1\{ k \geq 2\} \Prob \left ( (k-1)^{1/q} < |X/B| \leq k^{1/q} \right ) \\
              &\leq 2 C_q \sup_\Pin \sum_{n=1}^\infty \Prob \left ( |X/B| > n^{1/q} \right ) \\
    \fi 
              &\leq \frac{2 C_q}{B^q} \sup_\Pin \EE_P |X|^q.
  \end{align}
  Combining the upper bounds on $(\star)$ and $(\dagger)$ and taking limits as $B \to \infty$, we have
  \begin{equation}
    \lim_{B \to \infty}\sup_\Pin \sum_{n=1}^\infty \left \lvert \EE_P Z_{n, B}^\leqo \right \rvert = 0,
  \end{equation}
  completing the proof of \cref{lemma:satisfying uniform boundedness three series for q smaller than 1}.
\end{proof}

\subsubsection{An equivalent criterion of uniform integrability}

\begin{lemma}[Uniform integrability is equivalent to uniformly vanishing sums of tail probabilities]\label{lemma:uniformly-integrable-iff-uniformly-tail-vanishing}
  Let $Y$ be a random variable on the probability spaces $(\Omega,\Fcal, \Pcal)$. Then $Y$ has a $\Pcal$-UI $q^\tth$ moment if and only if the tail sum of its tail probability is $\Pcal$-uniformly vanishing, meaning
  \begin{equation}\label{eq:uniformly-integrable-qth-moment-tail-lemma}
    \lim_{m \to \infty}\sup_\Pin \EE_P \left ( |Y|^q \1(|Y|^q > m) \right ) = 0\quad\text{if and only if}\quad\lim_{m \to \infty}\sup_\Pin \sum_{k=m}^\infty \Prob \left ( |Y|^q > k \right ) = 0.
  \end{equation}
\end{lemma}

The above lemma gives an equivalent criterion of $\Pcal$-UI and was shown in \cite[Theorem 2.1]{hu2017note} where they refer to the property in the right-hand side of \eqref{eq:uniform-integrability-qth-moment} as ``$W^\star$ uniform integrability'' \citep[Definition 1.6]{hu2017note}. Since \cite[Theorem 2.1]{hu2017note} is written in the context of uniform integrability for a family of random variables $\infseqn{X_n}$ defined on a single probability space (as contrasted with $\Pcal$-UI in \cref{section:introduction}), we provide a self-contained proof for completeness.
\begin{proof}
  The forward implication is not hard to show since for any $P \in \Pcal$,
  \begin{align}
    \sum_{k=m}^\infty \Prob(|Y|^q > k) &= \sum_{k=m}^\infty \Prob(|Y|^q\1\{ |Y|^q > m\} > k)\\
    \ifverbose 
                                                   &\leq \sum_{k=1}^\infty \Prob(|Y|^q \1(|Y|^q > m) > k)\\
    \fi 
    &\leq \int_{0}^\infty \Prob(|Y|^q \1(|Y|^q > m) > k) dk\\
    &= \EE_P(|Y|^q \1(|Y|^q > m)).
  \end{align}
  The reverse implication is more involved. Suppose that there exists a collection of distributions $\Pcal$ so that right-hand side of \eqref{eq:uniformly-integrable-qth-moment-tail-lemma} holds but the left-hand side does not.
  First, note that we can write the supremum over expectations in \eqref{eq:uniformly-integrable-qth-moment-tail-lemma} as
  \begin{align}
    \sup_\Pin \EE_P \left ( |Y|^q \1 \{|Y|^q > m \} \right ) &= \supP \int_{0}^\infty \Prob \left ( |Y|^q \1\{ |Y|^q > m \} > k \right ) dk\\
                                                  &\leq \sup_\Pin \sum_{k=0}^\infty \Prob \left ( |Y|^q \1 (|Y|^q > m) > k \right )\\
    &\leq \underbrace{\sup_\Pin \sum_{k=m}^\infty \Prob \left ( |Y|^q \1\{ |Y|^q > m\} > k \right )}_{(\star)} +\\
    &\quad\underbrace{\sup_\Pin \sum_{k=0}^{m-1} \Prob \left ( |Y|^q \1\{ |Y|^q > m\} > k \right )}_{(\dagger)},
  \end{align}
  and since $(\star) \to 0$ as $m \to \infty$, we must have that $(\dagger) \not \to 0$, and hence
  \begin{equation}\label{eq:limsup-of-lower-partial-sum}
    \limsup_{m \to \infty} \sup_\Pin \sum_{k=0}^{m-1} \Prob \left ( |Y|^q \1\{ |Y|^q > m\} > k \right ) > \eps
  \end{equation}
  for some $\eps > 0$, or in other words, no matter how large we take $M$ to be, we can always find some $M^\star \geq M$ and $\Pstar \in \Pcal$ so that $\sum_{k=0}^{M^\star-1} \Pstar(|Y|^q \1\{|Y|^q > M^\star \} > k) > \eps$. Writing out the above sum, we have for any $m, P$,
  \begin{align}
    \sum_{k=0}^{m-1} \Prob \left ( |Y|^q \1\{ |Y|^q > m\} > k \right )=  \sum_{k=0}^{m-1} \Prob \left ( |Y|^q > m \right )
  \end{align}
  since $|Y|^q \1\{|Y|^q > m\} > k$ if and only if $|Y|^q > m$ whenever $k \leq m$. Carrying on with the above calculation, we have as a consequence of \eqref{eq:limsup-of-lower-partial-sum} that
  \begin{equation}\label{eq:limsup-of-m-times-prob}
    \limsup_{m \to \infty}\sup_\Pin m \Prob (|Y|^q > m) > \eps.
  \end{equation}
  Simultaneously, by the assumed uniform tail vanishing property in the right-hand side of \eqref{eq:uniformly-integrable-qth-moment-tail-lemma}, we can choose an $M$ sufficiently large so that
  \begin{equation}\label{eq:sum-of-tail-probs-is-too-small}
    \sup_\Pin \sum_{k=M}^\infty \Prob \left ( |Y|^q \1 \{ |Y|^q > M \} > k \right ) < \eps / 2.
  \end{equation}
  By \eqref{eq:limsup-of-m-times-prob}, find some $M^\star > 3M$ and $\Pstar \in \Pcal$ so that
  \begin{equation}\label{eq:tail-prob-is-too-big}
    M^\star \cdot \Pstar \left ( |Y|^q > M^\star \right ) > \eps.
  \end{equation}
  We will now show that \eqref{eq:sum-of-tail-probs-is-too-small} and \eqref{eq:tail-prob-is-too-big} are incompatible, leading to a contradiction. Indeed,
  \begin{align}
    &\sup_\Pin \sum_{k=M}^\infty \Prob \left ( |Y|^q \1 \{ |Y|^q > M \} > k \right ) \\
    \geq\ &\sum_{k=M}^\infty \Pstar \left ( |Y|^q \1 \{ |Y|^q > M \} > k \right ) \\
    \geq\ &\sum_{k=M}^{M^\star} \Pstar \left ( |Y|^q \1 \{ |Y|^q > M \} > k \right ) \\
    =\ &\sum_{k=M}^{M^\star} \Pstar \left ( |Y|^q > k \right ),
  \end{align}
  where the first inequality follows by definition of a supremum, the second since we are taking only a smaller sum over finitely many elements, and the last equality follows from the fact that whenever $k \geq M$, $|Y|^q \1\{ |Y|^q > M\} > k$ if and only if $|Y|^q > k$. Carrying on with the above calculation, we finally have
  \begin{align}
    &\sup_\Pin \sum_{k=M}^\infty \Prob \left ( |Y|^q \1 \{ |Y|^q > M \} > k \right ) \\
    \geq\ &\sum_{k=M}^{M^\star} \Pstar \left ( |Y|^q > k \right ) \\
    \geq\ &\sum_{k=M}^{M^\star} \Pstar \left ( |Y|^q > M^\star \right ) \\
    =\ &(M^\star - M) \Pstar \left ( |Y|^q > M^\star \right )\\
    \geq\ & (M^\star - M^\star / 3) \Pstar \left ( |Y|^q > M^\star \right )\\
    >\ & 2\eps / 3,
  \end{align}
  which contradicts the fact that the same sum was assumed to be less than $\eps / 2$ in \eqref{eq:sum-of-tail-probs-is-too-small}, completing the proof.
\end{proof}

\subsection{Proof details for \cref*{theorem:slln-noniid}}\label{proof:slln-noniid}

\begin{proof}[Proof of \cref{theorem:slln-noniid}]
  By \cref{lemma:sufficient-conditions-puniform-kolmogorov-three-series}, we have that the right-hand side of \eqref{eq:slln-noniid-moment-condition}
implies that the three series in \cref{theorem:kolmogorov-three-series} vanish $\Pcal$-uniformly with $c= 1$ for the random variables $Z_k^\leqo$ which are scaled and truncated versions of $X_k$ given by
\begin{equation}
  Z_k^\leqo := \frac{X_k - \EE_P(X_k)}{a_k} \cdot \1{\{ |X_k - \EE_P(X)| \leq a_k \}}.
\end{equation}
Therefore, by the $\Pcal$-uniform three series theorem (\cref{theorem:kolmogorov-three-series}) we have that $S_n \equiv S_n(P) := \sum_{k=1}^n (X_k - \EE_P(X_k)) / a_k$ forms a $\Pcal$-uniform Cauchy sequence.
Moreover, by \cref{theorem:boundedness three series theorem} and \cref{lemma:sufficient-conditions-puniform-boundedness-noniid}, we have that the $\Pcal$-uniform boundedness condition in the left-hand side of \eqref{eq:slln-noniid-moment-condition} implies that $S_n$ is $\Pcal$-uniformly stochastically nonincreasing.
Combining the facts that $\infseqn{S_n}$ is $\Pcal$-uniformly Cauchy and stochastically nonincreasing, we invoke the $\Pcal$-uniform Kronecker lemma (\cref{lemma:kronecker}) similar to the proof of \cref{theorem:mz-slln}(i) but now with the sequence $\infseqn{a_n}$ to yield the desired result, completing the proof of \cref{theorem:slln-noniid}.
\end{proof}

\begin{lemma}[Satisfying the three series for independent random variables]\label{lemma:sufficient-conditions-puniform-kolmogorov-three-series}
  Let $X_1, \dots, X_n$ be independent random variables on $(\Omega, \Fcal, \Pcal)$ and let $Y_n := X_n - \EE X_n$ be their centered versions. Suppose that for some increasing sequence $(a_n)_{n=1}^\infty$ that diverges to $\infty$,
  \begin{equation}
    \lim_{m \to \infty}\sup_\Pin \sum_{n=m}^\infty  \frac{\EE_P |Y_n|^q}{a_n^q} = 0.
  \end{equation}
  Then the three series conditions of \cref{theorem:kolmogorov-three-series} are satisfied for $Z_k := Y_k/a_k$ with $c = 1$.
\end{lemma}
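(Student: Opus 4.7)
The plan is to bound each of the three series pointwise in $n$ by $\EE_P|Y_n|^q/a_n^q$ and then invoke the single hypothesis of the lemma. The interval $q \in [1,2]$ is used in a two-sided way: $q \geq 1$ makes $(\cdot)^q$ dominate $(\cdot)$ for values exceeding one, while $q \leq 2$ makes $(\cdot)^q$ dominate $(\cdot)^2$ for values in $[0,1]$. So no truncation tricks or expectation--tail-sum manipulations are required; this lemma is essentially a bookkeeping reduction.

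For the third series (tail probabilities), I would simply apply Markov's inequality to get $\PP_P(|Z_n| > 1) = \PP_P(|Y_n| > a_n) \leq \EE_P|Y_n|^q/a_n^q$, then sum from $m$ to $\infty$ and take $\sup_{P \in \Pcal}$; the hypothesis immediately gives uniform vanishing. For the second series (truncated variances), I would use $\Var_P Z_n^{\leq 1} \leq \EE_P(Z_n^{\leq 1})^2 = \EE_P[(Y_n/a_n)^2 \1\{|Y_n| \leq a_n\}]$. On the event $\{|Y_n| \leq a_n\}$ the ratio $|Y_n|/a_n$ lies in $[0,1]$, so the bound $q \leq 2$ gives $(|Y_n|/a_n)^2 \leq (|Y_n|/a_n)^q$, yielding the termwise bound $\EE_P|Y_n|^q/a_n^q$ and hence uniform vanishing of the sum.

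For the first series (truncated means), I would exploit that $\EE_P Y_n = 0$ to rewrite $\EE_P Z_n^{\leq 1} = \EE_P[(Y_n/a_n)\1\{|Y_n| \leq a_n\}] = -\EE_P[(Y_n/a_n)\1\{|Y_n| > a_n\}]$, so that $|\EE_P Z_n^{\leq 1}| \leq \EE_P[(|Y_n|/a_n)\1\{|Y_n| > a_n\}]$. On the event $\{|Y_n| > a_n\}$ the ratio exceeds one, and since $q \geq 1$ we have $|Y_n|/a_n \leq (|Y_n|/a_n)^q$ there. This again bounds the summand by $\EE_P|Y_n|^q/a_n^q$, and the hypothesis concludes the proof.

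The only place any real care is needed is the first series, where centering $Y_n$ is essential in order to swap the truncation $\1\{|Y_n| \leq a_n\}$ for its complement $\1\{|Y_n| > a_n\}$ and thereby bring the comparison into the regime where $q \geq 1$ is useful. Apart from that, every step is a one-line Markov or power-comparison estimate, so I expect no serious obstacle.
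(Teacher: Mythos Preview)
Your proposal is correct and mirrors the paper's proof essentially line for line: the paper handles the first series by using $\EE_P Y_n = 0$ to swap to the complementary truncation and then applies $q \geq 1$, handles the second series via $\Var_P Z_n^{\leq 1} \leq \EE_P[(Z_n^{\leq 1})^2]$ together with $q \leq 2$, and handles the third series by Markov's inequality. Your identification of centering as the only nontrivial step is exactly what the paper does as well.
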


\begin{proof}
  First, define the truncated random variables $Z_n^\leqo := Z_n \1 \{|Z_n| \leq 1\}$
  for any $n$. We will satisfy the three series separately.
  \paragraph{The first series.} Writing out $\sup_\Pin \sum_{n=m}^\infty | \EE_P Z_n^\leqo |$, we have
  \begin{align}
    \sup_\Pin \sum_{n=m}^\infty  | \EE_P Z_n^\leqo | &= \sup_\Pin \sum_{n=m}^\infty \left \lvert  \EE_P \left ( \frac{Y_n\1(|Y_n| \leq a_n)}{a_n} \right ) \right \rvert\\
    \ifverbose 
    &= \sup_\Pin \sum_{n=m}^\infty \EE_P \left ( \frac{-Y_n\1(|Y_n| \geq a_n)}{a_n} \right )\\
    \fi 
    &\leq \sup_\Pin \sum_{n=m}^\infty \EE_P \left ( \frac{|Y_n|\1(|Y_n| > a_n)}{a_n} \right )\\
    &\leq \sup_\Pin \sum_{n=m}^\infty  \frac{\EE_P|Y_n|^q}{a_n^q} \to 0
  \end{align}
  where the last inequality follows from the fact that $(|Y_n|/a_n)\1 \{|Y_n| > a_n\} \leq (|Y_n|^q/a_n^q)\1\{ |Y_n| > a_n \}$ since $q \geq 1$.
  \paragraph{The second series.} Writing out $\sup_\Pin \sum_{n=m}^\infty \Var_P Z_n^\leqo$, we have
  \begin{align}
    \sup_\Pin \sum_{n=m}^\infty \Var_P Z_n^\leqo &\leq \sup_\Pin \sum_{n=m}^\infty \EE_P [ (Z_n^\leqo)^2] \\
    &= \sup_\Pin \sum_{n=m}^\infty \EE_P \left ( \frac{Y_n^2 \1(|Y_n| \leq a_n) }{a_n^2}\right ) \\
    &\leq \sup_\Pin \sum_{n=m}^\infty \frac{\EE_P |Y_n|^q}{a_n^q}  \to 0,
  \end{align}
  where the last inequality follows from the fact that $(Y_n^2 / a_n^2)\1\{ |Y_n| \leq a_n \} \leq (|Y_n|^q / a_n^q) \1\{ |Y_n| \leq a_n \}$ with $P$-probability one for each $P \in \Pcal$ since $q \leq 2$.
  \paragraph{The third series.} Finally, writing out the third series $\sup_\Pin \sum_{n=m}^\infty \Prob \left ( \left \lvert Y_n / a_n \right \rvert > 1 \right )$, we have by Markov's inequality,
  \begin{align}
    \sup_\Pin \sum_{n=m}^\infty \Prob \left ( \left \lvert \frac{Y_n}{a_n} \right \rvert > 1 \right )
    \leq \sup_\Pin \sum_{n=m}^\infty \frac{\EE |Y_n|^q}{a_n^q} \to 0, 
  \end{align}
  which completes the proof.
\end{proof}

\begin{lemma}[Satisfying the three series of $\Pcal$-uniform stochastic nonincreasingness under independence]\label{lemma:sufficient-conditions-puniform-boundedness-noniid}
  Let $X_1, \dots, X_n$ be independent random variables and let $Y_n := X_n - \EE X_n$ be their centered versions. Suppose that there exists $q \in [1,2]$ so that $\EE|Y_n|^q < \infty$ for each $n$ and that for some increasing sequence $(a_n)_{n=1}^\infty$ that diverges to $\infty$,
  \begin{equation}\label{eq:stochastic-nonincreasingness-lemma-condition}
    \sup_\Pin \sum_{n=1}^\infty  \frac{\EE_P |Y_n|^q}{a_n^q} < \infty.
  \end{equation}
  Then the three series conditions of \cref{theorem:boundedness three series theorem} are satisfied for $Z_k := Y_k / a_k$.
\end{lemma}

\begin{proof}
  First, define the truncated random variables $Z_{n,B}^\leqo$ as
  \begin{equation}
    Z_{n,B}^\leqo := \frac{(Y_n / B) \1\{ |Y_n / B| \leq a_n \}}{a_n} .
  \end{equation}
  We will satisfy the three series separately.
  \paragraph{The first series.} Writing out $\sup_\Pin \sum_{n=1}^\infty  \left \lvert \EE_P Z_{n,B}^\leqo \right \rvert$ for any $B > 0$, we have
  \begin{align}
    \sup_\Pin \sum_{n=1}^\infty \left \lvert \EE_P Z_{n,B}^\leqo \right \rvert &= \sup_\Pin \sum_{n=1}^\infty \left \lvert \EE_P \left ( \frac{(Y_n / B)\1(|Y_n/ B| \leq a_n)}{a_n} \right ) \right \rvert\\
    &\leq \sup_\Pin \sum_{n=1}^\infty \EE_P \left ( \frac{|Y_n/ B|\1(|Y_n/ B| > a_n)}{a_n} \right )\\
    &\leq \frac{1}{B^q} \underbrace{\sup_\Pin \sum_{n=1}^\infty  \frac{\EE_P|Y_n|^q}{a_n^q}}_{< \infty},
  \end{align}
  and we note that finiteness of the supremum on the final line follows from the condition of the lemma in \eqref{eq:stochastic-nonincreasingness-lemma-condition}. Taking limits as $B \to \infty$ completes the argument for the first series.
  \paragraph{The second series.} Writing out $\sup_\Pin \sum_{n=1}^\infty \Var_P Z_{n,B}^\leqo$, we have
  \begin{align}
    \sup_\Pin \sum_{n=1}^\infty \Var_P Z_{n,B}^\leqo &\leq \sup_\Pin \sum_{n=1}^\infty \EE_P [ (Z_{n,B}^\leqo)^2] \\
    &= \sup_\Pin \sum_{n=1}^\infty \EE_P \left ( \frac{(Y_n/ B)^2 \1(|Y_n/B| \leq a_n) }{a_n^2}\right ) \\
    &\leq \frac{1}{B^q}\underbrace{\sup_\Pin \sum_{n=1}^\infty \frac{\EE_P |Y_n|^q}{a_n^q}}_{< \infty}.
  \end{align}
  Therefore, $\lim_B \sup_\Pin \sum_{n=1}^\infty \Var_P Z_{n,B}^\leqo = 0$, completing the proof for the second series. 
  \paragraph{The third series.} Finally, writing out the third series $\sup_\Pin \sum_{n=1}^\infty \Prob \left ( \left \lvert Y_n / B  \right \rvert > a_n \right )$, we have by Markov's inequality,
  \begin{align}
    \sup_\Pin \sum_{n=1}^\infty \Prob \left ( \left \lvert Y_n / B \right \rvert > a_n \right )
    \leq \frac{1}{B^q} \underbrace{\sup_\Pin \sum_{n=1}^\infty \frac{\EE |Y_n|^q}{a_n^q}}_{< \infty} \to 0 
  \end{align}
  as $B \to \infty$ which completes the proof.
\end{proof}

\section{Application to uniformly consistent variance estimation}\label{section:application}

Estimating a parameter from a random sample is a fundamental task in the field of statistics. Indeed, using the language of statistical estimation, Kolmogorov's strong law of large numbers is synonymous with saying that the sample average $\widebar X_n := \frac{1}{n}\sum_{i=1}^n X_i$ is a strongly $P$-consistent estimator of the population mean $\EE_P(X)$ (meaning that the estimator converges to the estimand $P$-a.s.) while the theorems of \cite{marcinkiewicz1937fonctions} show that the \emph{rate} of strong $P$-consistency can be improved to $o_\as(n^{1/q - 1})$ when $\EE_P|X|^q < \infty$ for $1 < q < 2$. Using this same language, the SLLNs of \cite{chung_strong_1951} and \cref{theorem:mz-slln}(i) are simply alternative ways of stating that the sample mean is strongly $\Pcal$-\emph{uniformly} consistent for the mean in a class of distributions $\Pcal$ under the appropriate $\Pcal$-UI conditions.

For the above reasons, applications of SLLNs to statistical estimation of the \emph{mean} are immediate and transparent. In the context of \emph{variance} estimation, however, such applications require more care and have important implications for sequential hypothesis testing and statistical inference. For example, consider \cite[Section 2.5]{waudby2021time} who derive coverage guarantees of so-called ``asymptotic confidence sequences''. Let us summarize a simplified version of their goals here. Given a potentially infinite stream of \iid{} observations $X_1, X_2, \dots$ on a single probability space $(\Omega, \Fcal, P)$ and any desired coverage probability $(1-\alpha)$ for $\alpha \in (0, 1)$, they wish to construct a sequence of intervals $[L_k^{(\alpha)}, U_k^{(\alpha)}]_{k=m}^\infty$ --- where $L_k^{(\alpha)}$ and $U_k^{(\alpha)}$ are $\sigma(X_1, \dots, X_k)$-measurable --- so that these intervals cover the mean $\EE_P(X)$ uniformly for all $k \geq m$ with probability tending to $(1-\alpha)$, meaning that
\begin{equation}\label{eq:coverage}
  \lim_{m \to \infty} \Prob \left ( \forall k \geq m,\ \EE_P(X) \in [ L_k^{(\alpha)},  U_k^{(\alpha)} ] \right ) = 1-\alpha.
\end{equation}
The guarantee in \eqref{eq:coverage} can be viewed as a time-uniform\footnote{Here, we are using ``time'' to refer to the sample size as is common in the sequential inference literature \citep{howard_exponential_2018,howard2018uniform}.} analogue of the coverage guarantee enjoyed by large-sample confidence intervals for a fixed sample size. \cite[Section 2.5]{waudby2021time} derive intervals satisfying \eqref{eq:coverage} which rely on the variance $\Var_P(X)$ being strongly $P$-consistently estimated \emph{at a polynomial rate}. That is, their intervals involve an estimator $\widehat \sigma_n^2$ so that $\widehat \sigma_n^2 - \Var_P(X) = o(n^{-\beta})$ $P$-almost surely for some $\beta > 0$, and to achieve this, they let $\widehat \sigma_n^2$ be the sample variance and show via the Marcinkiewicz-Zygmund SLLN that this strong polynomial-rate consistency holds. Notice, however, that \eqref{eq:coverage} is a distribution-\emph{pointwise} convergence result, and hence a distribution-pointwise SLLN is sufficient for their goals. It is plausible that any hope of deriving a distribution-\emph{uniform} analogue of \eqref{eq:coverage} will rely on showing that $\widehat \sigma_n^2$ is strongly $\Pcal$-\emph{uniformly} consistent for the variance in the sense that $\widehat \sigma_n^2 - \Var(X) = \oPcalas(n^{-\beta})$ for some $\beta > 0$. The following corollary shows that such a guarantee follows from \cref{theorem:mz-slln}(i), laying some of the groundwork for potential extensions of \eqref{eq:coverage} to the uniform setting.

\begin{corollary}[$\Pcal$-uniformly strongly consistent variance estimation]\label{proposition:estimation}
  Let $X_1, \dots, X_n$ be \iid{} random variables defined on the collection of probability spaces $(\Omega, \Fcal, \Pcal)$. Suppose that the $(2+\delta)^\tth$ moment of $X$ is $\Pcal$-UI for some $\delta \in (0, 2)$ so that
  \begin{equation}
    \lim_\mto \sup_\Pin \EE_P \left ( |X - \EE_P(X)|^{2+\delta} \1\{ |X - \EE_P(X)|^{2+\delta} > m \} \right ) = 0.
  \end{equation}
  Then, the sample variance $\widehat \sigma_n^2$ is a $\Pcal$-uniformly strongly consistent estimator for the variance at a rate of $\oPcalas(n^{2/(2+\delta) - 1})$, meaning $\widehat \sigma_n^2 - \Var(X) = \oPcalas(n^{2/(2+\delta) - 1})$, or more formally,
  \begin{equation}
    \forall \eps > 0,\quad\lim_\mto \sup_\Pin \Prob \left ( \supkm \left \{ \frac{ \left \lvert \widehat \sigma_k^2 - \Var_P(X) \right \rvert}{k^{2 / (2+\delta) - 1} } \right \}  \geq \eps\right ) = 0.
  \end{equation}
\end{corollary}

\begin{proof}
First, letting $\widebar X_n := \frac{1}{n} \sum_{i=1}^n X_i$, notice that for any fixed $P \in \Pcal$, we can write $\widehat \sigma_n^2$ as
\begin{align}
  \widehat \sigma_n^2 - \Var_P(X) &\equiv \frac{1}{n}\sum_{i=1}^n (X_i - \widebar X_n)^2 - \Var_P(X) \\
  &= \underbrace{\frac{1}{n} \sum_{i=1}^n (X_i - \EE_P(X))^2 - \Var_P(X)}_{(\star)} - \underbrace{(\widebar X_n - \EE_P(X))^2}_{(\dagger)}.
\end{align}
Letting $q := 2+\delta$, note that by \cref{theorem:mz-slln}(i) we have that $\widebar X_n - \EE(X) = \oPcalas(n^{1/\gamma-1})$ for $\gamma := 2 / (2/q + 1)$ since $1 < \gamma < 2$ and in particular, $(\dagger) = \oPcalas(n^{2/q - 1})$. To analyze $(\star)$, we write $Y := X - \EE_P(X)$ and note that $ Y^2 \equiv (X - \EE_P(X))^2$ has a $\Pcal$-UI $(q/2)^\tth$ moment:
\begin{align}
  &\lim_\mto \sup_\Pin \EE_P \left ( \left \lvert Y^2 - \EE_P \left ( Y^2 \right ) \right \rvert^{q/2} \cdot \1 \{ \left \lvert Y^2 - \EE_P \left ( Y^2 \right ) \right \rvert^{q/2} > m \} \right ) \\
  \leq\ &\lim_\mto \sup_\Pin \EE_P \left ( \left \lvert Y^2 \right \rvert^{q/2} \cdot \1 \{ \left \lvert Y^2\right \rvert^{q/2} > m \} \right ) \\
  =\ &\lim_\mto \sup_\Pin \EE_P \left ( \left \lvert X-\EE_P(X) \right \rvert^{q} \cdot \1 \{ \left \lvert X - \EE_P(X)\right \rvert^{q} > m \} \right ) \\
  =\ &0, \label{eq:variance-proof-uniform-integrability-of-squared}
\end{align}
and hence we have that $(\star) = \oPcalas(n^{2/q - 1})$. Putting this together with the analysis for $(\dagger)$, we have by the triangle inequality that
\begin{align}
  &\sup_\Pin \Prob \left ( \supkm k^{-2/q+1}\left \lvert \widehat \sigma_n^2 - \Var_P(X) \right \rvert \geq \eps \right )  \\
  \leq\ &\sup_\Pin \Prob \left ( \supkm k^{-2/q+1}\left \lvert \frac{1}{k}\sum_{i=1}^k (X_i - \EE_P(X))^2 - \Var_P(X) \right \rvert \geq \eps \right ) +\\
  &\sup_\Pin \Prob \left ( \supkm k^{-1/\gamma + 1}\left \lvert \widebar X_k - \EE_P(X) \right \rvert \geq \sqrt{\eps} \right ),
\end{align}
and the first supremum vanishes as $\mto$ by \eqref{eq:variance-proof-uniform-integrability-of-squared} while the second vanishes as $\mto$ by the fact that $1 < \gamma < 2$, and hence $\widehat \sigma_n^2 - \Var(X) = \oPcalas(n^{2/q - 1})$, completing the proof.
\end{proof}

\section{Summary}

In this paper, we introduced a set of tools and techniques to derive distribution-uniform strong laws of large numbers, culminating in extensions of Chung's \iid{} strong law to uniformly integrable $q^\tth$ moments for $0 < q < 2;\ q\neq 1$ in the sense of \cite{marcinkiewicz1937fonctions} as well as to independent but non-identically distributed random variables. Furthermore, we showed that $\Pcal$-uniform integrability of the $q^\tth$ moment is both sufficient \emph{and necessary} for the strong law to hold at the Marcinkiewicz-Zygmund rates of $\oPcalas(n^{1/q-1})$, shedding new light on uniform strong laws even in Chung's case when $q = 1$.


We anticipate that the proof techniques found in Sections~\ref{section:other-strong-laws} and~\ref{section:complete proofs} may open vistas for understanding other distribution-uniform almost sure behavior. In particular, we plan to explore their use in the development of distribution-uniform analogues of strong invariance principles such as the Koml\'os-Major-Tusn\'ady embeddings \citep{komlos1975approximation,komlos1976approximation} in the presence of $q^\tth$ uniformly integrable moments when $q > 2$.


\bibliographystyle{plainnat}
\bibliography{references.bib}

\appendix
\section{A note on de la Vall\'ee-Poussin's criterion}

In \cref{lemma:phi-inequality}, we used the fact that in de la Vall\'ee-Poussin's criterion of uniform integrability, the function $h$ diverging to $\infty$ can be taken to be concave, strictly increasing, and starting at $h(0)=0$. Here, we provide a self-contained proof of this fact, using the same argument found in some notes on the website of Jos\'e A. Ca\~nizo \citep{canizo}.

\begin{lemma}\label{lemma:canizo-vallee-poussin}
  Let $Y$ be a random variable on the probability spaces $(\Omega, \Fcal, \Pcal)$ so that $\EE_P |Y| <\infty$ for every $P \in \Pcal$. Then $Y$ is (uncentered) uniformly integrable if and only if there exists a measurable function $h : [0, \infty) \to [0, \infty)$ that is concave, strictly increasing, and starting at $h(0)=0$ so that
  \begin{equation}\label{eq:canizo-vallee-poussin}
    \sup_\Pin \EE_P \left [ |Y| h(|Y|) \right ] < \infty.
  \end{equation}
\end{lemma}

\begin{proof}
  The ``if'' direction follows from the usual statement of de la Vall\'ee-Poussin's theorem \citep{chong1979theorem} so we will focus on the ``only if'' direction. That is, we are tasked with finding a function $h$ satisfying the conditions above. To begin, define the uniform integrability tail $\UU_P(y)$ with lower truncation level $y \geq 0$ as
  \begin{equation}
    \UU_P(y) := \EE_P \left ( |Y| \1 \{ |Y| \geq y \} \right ).
  \end{equation}
  and let $\UU(y) := \sup_\Pin \UU_P(y)$, noting that $\UU$ is nonincreasing and $\UU(y) \to 0$ as $y \to \infty$ by $Y$ being uniformly integrable. Now, for every $n = 1, 2, \dots$, define
  \begin{equation}
    a_n := \inf\{ y > 0 : \UU(x) < 1 / n^2 \}
  \end{equation}
  and consider the increasing sequence $(x_n)_{n = 0, 1, \dots}$ starting at $x_0 = 0$ and defined recursively as
  \begin{equation}
    x_{n+1} := \max \{ x_n + 1, a_{n+1} + 1 \},
  \end{equation}
  noting that $x_n$ is strictly increasing in $n$ and diverging to $\infty$. Further, observe that $\UU(x_n) \leq 1/n^2$ since $x_n \geq a_n$ for each $n$. Now define the function $\phi$ as the step function given by
  \begin{equation}
    \phi(x) := n+1 \quad\text{when $x \in [x_n, x_{n+1})$ for $n = 0, 1,2 ,\dots$,}
  \end{equation}
  and notice that $\phi(x)$ diverges in $x$ because $(x_n)_n$ does in $n$. We now observe that $\supP \EE_P [ |Y| \phi(|Y|) ]$ is bounded:
  \begin{align}
    \supP \EE_P \left [ |Y| \phi(|Y|) \right ] &= \supP \EE_P \left [ \sum_{n=0}^\infty |Y| \1 \{ |Y|\geq x_n \} \right ] \\
    \ifverbose 
    &= \supP \EE_P \left [ \lim_{N \to \infty} \sum_{n=0}^N |Y| \1 \{ |Y|\geq x_n \} \right ] \\
    &= \supP \lim_{N \to \infty}\sum_{n=0}^N \EE_P \left [ |Y| \1 \{ |Y|\geq x_n \} \right ] \\
    \fi 
    &= \supP \sum_{n=0}^\infty \EE_P \left [ |Y| \1 \{ |Y|\geq x_n \} \right ] \\
    &= \supP \sum_{n=0}^\infty \UU_P ( x_n ) \\
                                               &\leq \supP \underbrace{\UU_P(0)}_{\EE_P |Y|} + \underbrace{\supP \sum_{n=1}^\infty \frac{1}{n^2}}_{\pi^2 / 6} < \infty.
  \end{align}
  Now, we will construct $h$ as a piecewise-linear, nonnegative, and continuous concave minorant of $\phi$. Following \cite{canizo}, we will let $(d_n)_{n=0}^\infty$ denote the piecewise derivatives of $h$ and define both $d_n$ and $h(x)$ recursively as $d_0 = 1$, $h(0) = 0$ and
  \begin{align}
    d_{n+1} &:= \min \left \{ d_{n}, \frac{n+1-h(x_n)}{x_{n+1} - x_n} \right \}; \quad n = 0, 2, \dots,\\
    h(x) &:= h(x_n) + d_{n+1}(x-x_n); \quad x \in [x_n, x_{n+1}].
  \end{align}
  Clearly $h$ is continuous. Moreover, notice that it is differentiable on each interval $(x_n, x_{n+1})$ with decreasing derivative $d_{n+1}$ so $h$ is concave. This completes the proof.
\end{proof}

\end{document}
